\definecolor{black}{rgb}{0.0, 0.0, 0.0}
\definecolor{red}{rgb}{1.0, 0.5, 0.5}
\newcommand{\margnote}[1]{
\ifthenelse{\boolean{shownotes}}%
{\marginpar{\raggedright\tiny\texttt{#1}}}%
{}%
}
\newcommand{\hole}[1]{
\ifthenelse{\boolean{shownotes}}%
{\begin{center} \fbox{ \rule {.25cm}{0cm} \rule[-.1cm]{0cm}{.4cm}
\parbox{.85\textwidth}{\begin{center} \texttt{#1}\end{center}} \rule
{.25cm}{0cm}}\end{center}} {} }
\title[Quantitative light-particle limit for the Vlasov--Fokker--Planck--Navier--Stokes system]{Quantitative light-particle limit for the Vlasov--Fokker--Planck--Navier--Stokes system}
\author[Choi]{Young-Pil Choi}
\address[Young-Pil Choi]{\newline Department of Mathematics\newline
Yonsei University, 50 Yonsei-Ro, Seodaemun-Gu, Seoul 03722, Republic of Korea}
\email{ypchoi@yonsei.ac.kr}
\author[Jung]{Jinwook Jung}
\address[Jinwook Jung]{\newline Department of Mathematics and  Research Institute for Natural Sciences \newline
Hanyang University, 222 Wangsimni-ro, Seongdong-gu, Seoul 04763, Republic of Korea}
\email{jinwookjung@hanyang.ac.kr}
\numberwithin{equation}{section}
\newtheorem{theorem}{Theorem}[section]
\newtheorem{lemma}{Lemma}[section]
\newtheorem{proposition}{Proposition}[section]
\newtheorem{remark}{Remark}[section]
\newtheorem{definition}{Definition}[section]
\newcommand{\R}{\mathbb R}
\newcommand{\om}{\Omega}
\newcommand{\N}{\mathbb N}
\newcommand{\ls}{\lesssim}
\newcommand{\T}{\mathbb T}
\newcommand{\bq}{\begin{equation}}
\newcommand{\eq}{\end{equation}}
\newcommand{\e}{\varepsilon}
\newcommand{\lt}{\left}
\newcommand{\rt}{\right}
\newcommand{\lal}{\langle}
\newcommand{\ral}{\rangle}
\newcommand{\pa}{\partial}
\newcommand{\mh}{\mathcal{H}}
\newcommand{\md}{\mathcal{D}}
\newcommand{\intr}{\int_{\R^2}}
\newcommand{\intrr}{\iint_{\om \times \R^2}}
\newcommand{\into}{\int_\Omega}
\newcommand{\calL}{\mathcal L}
\def\moverlay{\mathpalette\mov@rlay}
\def\mov@rlay#1#2{\leavevmode\vtop{%
   \baselineskip\z@skip \lineskiplimit-\maxdimen
   \ialign{\hfil$\m@th#1##$\hfil\cr#2\crcr}}}
\newcommand{\charfusion}[3][\mathord]{
    #1{\ifx#1\mathop\vphantom{#2}\fi
        \mathpalette\mov@rlay{#2\cr#3}
      }
    \ifx#1\mathop\expandafter\displaylimits\fi}
\begin{document}
\allowdisplaybreaks

\date{\today}

\keywords{Vlasov--Fokker--Planck--Navier--Stokes system, light particle limit, hydrodynamic limit, relative entropy method, quantitative convergence, negative Sobolev estimates.}

\begin{abstract} 
We investigate the hydrodynamic limit of the Vlasov--Fokker--Planck--Navier--Stokes system in the light particle regime, where the particle relaxation takes place on a singularly fast time scale. Using a relative entropy method adapted to this scaling, we develop the first quantitative convergence theory for the light particle limit. Our analysis yields explicit rates for the convergence of both the kinetic distribution and the fluid velocity, extending the qualitative compactness-based result of Goudon, Jabin, and Vasseur [Indiana Univ. Math. J., 53, (2004), 1495--1515].  Moreover, we derive refined convergence estimates for the macroscopic density and fluid velocity in negative Sobolev spaces, consistent with the formally optimal rates predicted by the Hilbert expansion. The results apply to both the torus and the whole space,  providing a unified quantitative description of the light particle hydrodynamic limit.
\end{abstract}

\maketitle \centerline{\date}

\tableofcontents

%
%
%
%
\section{Introduction}

The purpose of this paper is to investigate the asymptotic analysis of a coupled system describing the evolution of a cloud of dispersed particles immersed in an incompressible viscous fluid. Such particle-fluid interactions arise in a wide range of industrial and scientific applications, including sedimentation processes in medical and chemical engineering, wastewater treatment, as well as the modeling of aerosols and sprays in mechanical systems \cite{BBJM05,Wil58,Wil85}. We refer to \cite{Des10,Mat06,ORo81} and references therein for additional background on the physical mechanisms underlying particle-fluid coupling.
 
At the microscopic level, the dispersed phase is described by its distribution function $f = f(t,x,\xi)$, where $(t,x,\xi) \in \R_+ \times \Omega \times \R^2$ and $\Omega = \T^2$ or $\R^2$. The particle distribution evolves according to a Vlasov--Fokker--Planck equation of the form
\[
\pa_t f + \xi \cdot \nabla_x f = \nabla_\xi \cdot (\nabla_\xi f - F_d f),
\]
where the drag force $F_d$ is taken to be proportional to the relative velocity between the fluid and the particles \cite{Bri49}, 
\[
F_d  = v(t,x) - \xi.
\]
The surrounding fluid is governed by the incompressible Navier--Stokes system
\begin{align*}
&\pa_t v + v \cdot \nabla_x v + \nabla_x P -  \Delta_x v = -\intr F_d f\,d\xi,\cr
&\nabla_x \cdot v = 0.
\end{align*}
 
The mathematical analysis of this kinetic-fluid model is highly nontrivial, largely due to the intrinsic mismatch between the Eulerian description of the fluid variables and the phase-space formulation of the particle distribution. The kinetic density $f(t,x,\xi)$ is posed on the full phase space and evolves along characteristic particle trajectories, whereas the fluid velocity $v(t,x)$ depends solely on the spatial coordinates. This structural asymmetry introduces substantial difficulties in controlling nonlinear couplings, integrating different regularity frameworks, and establishing compactness in mixed variables. These challenges have motivated a long line of research on the analytical properties of kinetic-fluid systems, and a comprehensive theory has gradually emerged over the past decades.
 
From the analytical viewpoint, the well-posedness theory for Vlasov--Navier--Stokes systems has been developed through a substantial sequence of contributions. Global weak solutions in bounded or periodic domains were first obtained in \cite{BDGM09, Ham98}. Subsequent works extended these theories to more complex boundary behaviors such as specular reflection or absorption \cite{BGM17, Yu13}, and to inhomogeneous incompressible backgrounds with large initial data \cite{WY15}. The uniqueness of weak solutions in two dimensions was proved in \cite{DS26, HEM20}. A global weak solution theory for the Vlasov--Fokker--Planck--Navier--Stokes system in the whole space for two and three dimensions was established in \cite{CKL11}, while boundary value problems were studied in \cite{CJ21, LLY22}. For small and smooth perturbations around global Maxwellians, small-data global classical solutions and large-time decay estimates for Vlasov--Fokker--Planck--Navier--Stokes type systems were obtained in \cite{CDM11,DL13,GHMZ10}. In contrast, small-data global classical solutions for the Vlasov--inhomogeneous Navier--Stokes system were derived in \cite{CK15}. Beyond these foundational works, the coupling between Vlasov dynamics and various non-Newtonian or generalized fluid models has been analyzed in \cite{CJWpre,HKKP18,KKK23,MPP18,ZFG24}, together with additional studies concerning collision operators in the kinetic equation \cite{CJ24,CLY21,CY20,GY20,YY18}. Several works have also addressed qualitative behavior and large-time asymptotics of kinetic-fluid solutions \cite{Cho16,Han22,HMM20}. Collectively, these developments provide a robust framework for analyzing Vlasov--Navier--Stokes type systems.
 
In this work, we focus on the asymptotic regime associated with the \emph{light particle scaling}, originally introduced in \cite{GJV04}. Under this scaling, the particle mass is much smaller than that of the fluid, and the relaxation toward equilibrium takes place on an increasingly fast time scale. The resulting singularly perturbed Vlasov--Fokker--Planck--Navier--Stokes system is given by
\begin{align}\label{main1}
\begin{aligned}
&\pa_t f^\e + \frac1\e \xi \cdot \nabla_x f^\e  = \frac1{\e^2}\nabla_\xi \cdot (  \nabla_\xi f^\e + (\xi - \e v^\e ) f^\e), \quad  (t,x,\xi) \in \R_+ \times \Omega \times \R^2,\cr
&\pa_t v^\e + v^\e \cdot \nabla_x v^\e + \nabla_x P^\e -  \Delta_x v^\e = \frac1\e \intr (\xi - \e v^\e)f^\e\,d\xi,\cr
&\nabla_x \cdot v^\e = 0,
\end{aligned}
\end{align}
posed on $\Omega = \T^2$ or $\R^2$. The goal is to identify the effective macroscopic dynamics as $\e \to 0$ and to justify rigorously the corresponding hydrodynamic limit; see \cite{GJV04} for the derivation and dimensional analysis of this scaling.
 
The asymptotic analysis of Vlasov--Navier--Stokes and Vlasov--Fokker--Planck--Navier--Stokes systems has been extensively studied over the past two decades, with two distinct scaling regimes, light-particle (parabolic) and fine-particle (hyperbolic), leading to markedly different analytical structures. See \cite{CG06, CGL08} for related asymptotic regimes and methodologies.

In the light-particle regime, the first convergence result was obtained for the Vlasov--Fokker--Planck / incompressible Navier--Stokes system on $\T^2\times\R^2$ using compactness arguments \cite{GJV04}. Further extensions to inhomogeneous incompressible Navier--Stokes flows in bounded domains were also derived through qualitative compactness methods \cite{Gha26}, and related one-dimensional Vlasov--compressible Navier--Stokes couplings were treated similarly \cite{CWY20}. These analyses provide qualitative convergence but no quantitative bounds, and they are restricted to periodic or bounded spatial domains.

In contrast, the fine-particle (hyperbolic) regime has been successfully treated by the relative entropy method, which yields quantitative stability and convergence estimates. This includes results for Vlasov--Fokker--Planck / incompressible Navier--Stokes systems in the whole space \cite{GJV04_2}, for compressible Navier--Stokes couplings in bounded domains \cite{MV08}, and for Vlasov or Vlasov--Fokker--Planck interactions with inhomogeneous incompressible Navier--Stokes flows \cite{GM23, SY20, Gha10, SWYZ23}.  Further quantitative studies for nonlinear Fokker--Planck / Navier--Stokes systems appear in \cite{CJ21, CJ23}, and modulated energy methods capable of treating both scaling regimes for Vlasov / incompressible Navier--Stokes system in periodic domains are developed in \cite{HM24}.  We also refer to the earlier qualitative analysis of the hyperbolic regime in 1D for a Vlasov / viscous Burgers coupling \cite{Gou01}.

Despite the extensive literature on kinetic-fluid limits, quantitative results for the light-particle regime have not been available in any spatial setting. Existing works in this scaling \cite{CWY20, Gha26, GJV04} are qualitative in nature, relying on compactness arguments, and thus provide no convergence rates for either the kinetic or fluid components. Moreover, these analyses are restricted to periodic or bounded domains, and even qualitative convergence in the whole space has not been established.

The present work addresses these limitations by developing the \emph{first quantitative stability framework} for the light-particle limit, applicable both on the torus and in the whole space. Our approach adapts the unified relative entropy method introduced in \cite{CJpre} to the singular light-particle scaling, yielding explicit convergence rates for the kinetic and fluid variables and, in particular, refined second-order estimates for the macroscopic density and fluid velocity in negative Sobolev spaces. This quantitative framework fills the gap left by previous qualitative studies and forms the core of the main theorem stated below.

%
%
%
%
%
 
\subsection{Formal asymptotic limit $\e \to 0$} We outline here the formal derivation of the limiting system as $\e \to 0$. First, the kinetic equation in \eqref{main1} can be rewritten as
\[
\e \pa_t f^\e + \xi \cdot \nabla_x f^\e = \frac1\e \nabla_\xi \cdot \lt(f^\e \nabla_\xi \log\lt(\frac{f^\e}{M_{\rho_{f^\e}, \e v^\e}} \rt) \rt),
\]
where
\[
M_{\rho_{f^\e}, \e v^\e} := \frac{\rho_{f^\e}}{2\pi} \exp\lt( - \frac{|\xi - \e v^\e|^2}{2} \rt).
\]
This formulation shows that, as $\e \to 0$, the right-hand side enforces a fast relaxation of $f^\e$ toward the local Maxwellian $M_{\rho_{f^\e}, \e v^\e}$. At the formal level, the drag forcing term in the fluid equation converges to $-\nabla_x\rho$, which is a pure gradient and can thus be absorbed into the pressure. Hence, $v^\e$ converges to the solution of the decoupled incompressible Navier--Stokes system:
 \begin{align*}
&\pa_t v + v\cdot \nabla_x v + \nabla_x P -  \Delta_x v = 0,\cr
&\nabla_x \cdot v = 0.
\end{align*}

For the kinetic component, computing the local mass and momentum balances gives
 \begin{align*}
&\e \pa_t \rho_{f^\e} + \nabla_x \cdot m_{f^\e} = 0, \cr
&\e \pa_t m_{f^\e} + \nabla_x \cdot \lt(\intr \xi \otimes \xi f^\e\,d\xi \rt) = -\frac1\e m_{f^\e} + \rho_{f^\e} v^\e, 
\end{align*}
where  
\[
m_f: = \intr \xi f\,d\xi.
\]
Combining the above yields
\bq\label{formal_d_kin}
\pa_t \rho_{f^\e} + \nabla_x \cdot (\rho_{f^\e} v^\e) =\Delta_x \rho_{f^\e} + \e \nabla_x \cdot \pa_t m_{f^\e} + \nabla_x \otimes \nabla_x : \intr (\xi \otimes \xi - \mathbb{I}_2) f^\e \,d\xi.
\eq
Since $f^\e$ relaxes to $M_{\rho_{f^\e}, \e v^\e}$, we have
\[
\intr \xi \otimes \xi M_{\rho_{f^\e}, \e v^\e}\,d\xi = \rho_{f^\e} \lt(  \e^2 v^\e \otimes v^\e +  \mathbb{I}_2  \rt)  \quad \mbox{and} \quad   \intr \xi M_{\rho_{f^\e}, \e v^\e} \,d\xi = \e\rho_{f^\e} v^\e. 
\]
Thus, we obtain the approximation 
\begin{align*}
&\e \nabla_x \cdot \pa_t m_{f^\e} + \nabla_x \otimes \nabla_x : \intr (\xi \otimes \xi - \mathbb{I}_2) f^\e\,d\xi  \cr
&\quad \sim \e^2\lt[\nabla_x \cdot \pa_t (\rho_{f^\e} v^\e) + \nabla_x \otimes \nabla_x :  (\rho_{f^\e} v^\e \otimes v^\e) \rt]= \mathcal{O}(\e^2), \quad (\e \ll 1).
\end{align*}
Hence, in the limit $\e\to0$, the right-hand side of \eqref{formal_d_kin} converges formally to 0, yielding the limiting advection-diffusion equation
\[
\pa_t \rho +  \nabla_x \cdot (\rho v) =   \Delta_x \rho.
\]

In summary, the formal limit of \eqref{main1} is the coupled advection-diffusion/incompressible Navier--Stokes system:
\begin{align}\label{lim_light}
\begin{aligned}
&\pa_t \rho +  \nabla_x \cdot (\rho v) =   \Delta_x \rho, \cr
&\pa_t v + v\cdot \nabla_x v + \nabla_x P -  \Delta_x v = 0,\cr
&\nabla_x \cdot v = 0.
\end{aligned}
\end{align}

%
%
%
%
%

\subsection{Main result} We now present the precise formulation of our quantitative convergence result for the light-particle scaling. Before stating the theorem, we introduce the analytical tools and notational conventions that will be used throughout the discussion. 

Our approach relies fundamentally on a relative entropy structure adapted to the Vlasov--Fokker--Planck dynamics. For a nonnegative distribution $f: \R_+ \times \Omega \times \R^2 \to \R_+$, and macroscopic fields $\rho: \R_+ \times \Omega \to \R_+$, $u:\R_+ \times \Omega \to \R^2$, we define the relative entropy with respect to the local Maxwellian
\[
M_{\rho, u}(\xi) := \frac{\rho}{2\pi }\exp\lt( -\frac{|u-\xi|^2}{2} \rt)
\]
by
\[
\mathscr{H}[f | M_{\rho,u}] := \intrr f \log \frac{f}{M_{\rho,u}}  \, dx d\xi - \intrr \lt(f - M_{\rho,u} \rt) dx d\xi.
\]
Throughout the paper we assume, without loss of generality, that $f$ has unit mass:
\[
\iint_{\om \times \R^2} f(t,x,\xi)\,dxd\xi = 1, \quad t \geq 0.
\]
This normalization is preserved by the kinetic equation and simplifies several entropy expressions appearing later. This structure provides a natural way to measure the discrepancy between the kinetic solution and its Maxwellian approximation, and it plays a central role in quantifying the macroscopic limit.

To capture the refined convergence of the macroscopic density and fluid velocity, we work with negative Sobolev norms. More precisely, for $s\ge0$, we denote by $H^{-s}(\Omega)$ the dual space of $H^s(\Omega)$. These spaces provide a convenient weak topology in which the parabolic structure of the limiting system can be exploited to obtain quantitative estimates for $\rho_{f^\e}-\rho$ and $v^\e-v$. In particular, they are well suited to the optimal second-order expansions derived formally in Appendix \ref{app:op_rate}.

Finally, we record the basic notational conventions used throughout the paper. For functions $f(x,\xi)$ on $\Omega \times \R^2$ and $g(x)$ on $\Omega$, we write  $\|f\|_{L^p}$ and $\|g\|_{L^p}$ for their respective $L^p$-norms. We also employ the velocity-weighted $L^1$-norm defined by 
 \[
 \|f\|_{L^1_2} := \iint_{\om \times \R^2}  (1+|\xi|^2)f\,dxd\xi. 
 \]
 We denote by $\mathcal{P}(\Omega)$ the space of probability measures on $\Omega$. Generic constants are denoted by $C>0$ and may vary from line to line, and we write $f \ls g$ when $f \leq Cg$. For readability, we occasionally omit the $x$-subscript in differential operators and write, for instance, $\nabla f = \nabla_x f$ and $\Delta f = \Delta_x f$.

We now recall the notion of weak entropy solutions to the kinetic-fluid system \eqref{main1}.
This concept, originating from \cite{GJV04}, provides a natural framework for solutions with finite energy and entropy, and will serve as the solution class in which our convergence theorem is formulated.
\begin{definition}\label{def_weak}Let $T>0$. We say that $(f^\e, v^\e)$ is a weak entropy solution to \eqref{main1} with initial data $(f_0^\e, v^\e_0)$ if the following conditions hold:
\begin{enumerate}[label=(\roman*)]
\item $f^\e \in L^\infty([0,T]; L_+^1(\Omega \times \R^2, (1 + |x| + |\xi|^2)\,dxd\xi) \cap L \log L(\Omega \times \R^2))$,
\item $v^\e \in L^\infty([0,T]; L^2(\om)) \cap L^2([0,T];H^1(\om))$,
\item $(f^\e, v^\e)$ satisfies \eqref{main1} in the sense of distributions,
\item the total energy inequality holds for almost every $t \in [0,T]$:
\begin{align*}
& \intrr \lt(\frac12|\xi|^2 + \log f^\e(t)\rt)f^\e(t)\,dxd\xi + \frac12 \|v^\e(t)\|_{L^2}^2 \cr
&\quad +  \frac1{\e^2}\int_0^t\iint_{\om \times \R^2}  \frac1{f^\e}|  \nabla_\xi f^\e + (\xi - \e v^\e) f^\e|^2 \,dxd\xi ds  + \int_0^t\int_\om |\nabla v^\e|^2\,dxds\cr
&\quad \leq  \intrr \lt(\frac12|\xi|^2 + \log f^\e_0\rt)f^\e_0\,dxd\xi + \frac12 \|v^\e_0\|_{L^2}^2 .
\end{align*}
\end{enumerate}
\end{definition}

This class of solutions was introduced in \cite{GJV04} for the periodic domain $\Omega = \T^2$ and later extended to the whole-space setting in \cite{CKL11}. A related entropy framework has also been used in bounded domains for the Vlasov--Fokker--Planck equation coupled with the inhomogeneous Navier--Stokes system \cite{Gha26}. Since the existence theory for weak entropy solutions to \eqref{main1} is already well established in these works, we do not revisit it here.

With these preparations in place, we are now ready to state our quantitative convergence theorem.
 
\begin{theorem}\label{thm_main} Let $T>0$, and let $(\rho,v)$ be the unique classical solution to the system \eqref{lim_light} satisfying
\[
\rho\in L^\infty(0,T; \mathcal{P}\cap L^\infty(\om)), \quad v, \nabla \log \rho \in L^\infty(0,T; W^{1,\infty}(\Omega)) \cap W^{1,\infty}(0,T; L^\infty(\Omega)).
\]
Let $\{(f^\e, v^\e)\}_{\e > 0}$ be a family of weak entropy solutions to the equation \eqref{main1} on the time interval $[0,T]$, with initial data $\{(f^\e_0, v^\e_0)\}_{\e > 0}$ satisfying the uniform bounds
\[
\sup_{\e > 0}\lt( \||x|f_0^\e\|_{L^1}+ \|f^\e_0\|_{L^1_2 \cap L \log L} +  \|v^\e_0\|_{L^2}\rt) < \infty.
\]
If the initial data is well-prepared in the sense that
\[ 
  \mathscr{H}[f^\e | M_{\rho, 0}](0) + \|v^\e_0 - v_0\|_{L^2}^2 \to 0 \quad \mbox{as } \e \to 0,
\] 
then we have
\begin{align*}
f^\e &\to M_{\rho, 0} \quad \mbox{in } L^\infty(0,T; L^1(\Omega \times \R^2)), \cr
  \intr f^\e\,d\xi &\to \rho \quad \mbox{in } L^\infty(0,T; L^1(\Omega)),\cr
  \frac1\e \intr \xi f^\e\,d\xi  &\to \rho\lt(v- \nabla \log \rho \rt) \quad \mbox{in } L^1((0,T) \times \Omega),\cr
  v^\e &\to v \quad \mbox{in } L^\infty(0,T; L^2(\Omega)) \cap L^2(0,T;H^1(\om)).
\end{align*}
In addition, the following stability estimate holds:
\begin{align*}
&\|f^\e - M_{\rho, 0}\|_{L^\infty(0,T;L^1)}^2 + \|\rho_{f^\e} - \rho\|_{L^\infty(0,T;L^1)}^2 + \|v^\e - v\|_{L^\infty(0,T;L^2) \cap L^2(0,T; H^1)}^2 \cr
&\quad +  \|  \frac1\e \intr \xi f^\e\,d\xi  - \rho\lt(v- \nabla \log \rho \rt)\|_{L^1((0,T) \times \Omega)}^2  \cr
 &\quad \leq C\lt(  \mathscr{H}[f^\e | M_{\rho, 0}](0) + \|v^\e_0 - v_0\|_{L^2}^2\rt) + C\e^2.
\end{align*}

 If we further assume that $v \in L^\infty(0,T;H^{s_1+1}(\Omega))$ and $\rho \in L^\infty(0,T; H^{s_1}(\Omega))$  for some $s_1 > 1$, then the density and fluid velocity errors satisfy the following refined quantitative estimates in negative Sobolev spaces:
 \begin{align}\label{ess_opt}
 \begin{aligned}
 \|v^\e-v\|_{L^2(0,T;H^{-s_1})} &\le C\lt( \|v_0^\e - v_0\|_{H^{-s_1}} + \mathscr{H}[f^\e | M_{\rho, 0}](0)  + \|v^\e_0 - v_0\|_{L^2}^2 + \e^2\rt), \\
 \|\rho_{f^\e} -\rho\|_{L^2(0,T;H^{-s_1-1})}  &\le C\lt(\|\rho_0^\e - \rho_0\|_{H^{-s_1-1}}  + \|v_0^\e - v_0\|_{H^{-s_1}} +  \mathscr{H}[f^\e |M_{\rho, 0}](0) + \|v_0^\e - v_0\|_{L^2}^2 + \e^2 \rt).
 \end{aligned}
 \end{align}
Here, the constant $C>0$ is independent of $\e>0$, depending only on the initial data, $T$, and the limiting solution $(\rho, v)$.

\end{theorem}
 
 \begin{remark}\label{rmk_expan} 
The quantitative convergence of all macroscopic quantities follows directly from the structure of the modulated energy.  If we write $u_f := \frac{m_f}{\rho_f}$, the algebraic decomposition
\bq\label{eq_exp}
  \intr f \log \frac{f}{M_{\rho, u}}\,d\xi = \intr f \log \frac{f}{M_{\rho_f, u_f}}\,d\xi +   \rho_f \log \frac{\rho_f}{\rho} + \frac{\rho_f}{2}|u_f - u|^2,
\eq
which is valid for any $\rho \in \R_+$ and $u \in \R^2$, separates the kinetic, density, and
momentum contributions in the relative entropy. Controlling the relative entropy $\mathscr{H}[f^\e |M_{\rho,0}]$ therefore yields the quantitative $L^1$-convergence of $f^\e$ to $M_{\rho,0}$ and of $\rho_{f^\e}$ to $\rho$ through the Csisz\'ar--Kullback--Pinsker inequality. The same entropy dissipation also supplies a quantitative bound for the scaled momentum density $\frac1\e m_{f^\e}$ since its coercive part appears explicitly in the relative entropy dissipation. 

The convergence of the fluid velocity $v^\e$ is obtained from the fluid component of the modulated energy, which is tightly coupled with the kinetic entropy. In particular, the decay of the total modulated energy directly yields the quantitative convergence $v^\e \to v$ in the norms stated in Theorem \ref{thm_main}.
 \end{remark}

  \begin{remark} 
The stability estimates in Theorem \ref{thm_main} are consistent with the formal expansion
derived in Appendix \ref{app:op_rate}. Accordingly, the term \emph{optimal} should be understood here in a formal and variable-dependent sense, namely with respect to the asymptotic orders predicted by the Hilbert expansion and in the topology in which the estimate is obtained.

For the kinetic density $f^\e$, the estimate
\[
\|f^\e-M_{\rho,0}\|_{L^\infty(0,T;L^1)}^2 \ls \mathscr{H}[f^\e | M_{\rho, 0}](0)+\|v^\e_0-v_0\|_{L^2}^2 + \e^2
\]
is optimal at the level of the formal expansion. Indeed, the formal Hilbert expansion shows that
\[
f^\e= M_{\rho,0} + \e f_1 + O(\e^2),
\]
with a nontrivial first-order corrector $f_1$.

For the macroscopic density $\rho_{f^\e}$, the first-order correction cancels at the formal level, and Appendix \ref{app:op_rate} predicts the second-order expansion
\[
\rho_{f^\e}=\rho+O(\e^2).
\]
Accordingly, the negative Sobolev estimate in \eqref{ess_opt},
\[
\|\rho_{f^\e}-\rho\|_{L^2(0,T;H^{-s_1-1})}\ls \e^2
\]
under suitably well-prepared initial data of order $O(\e^2)$, is consistent with the formal asymptotics and may therefore be regarded as essentially optimal in this weak topology.

A similar phenomenon occurs for the fluid velocity. While the strong estimate in Theorem \ref{thm_main} yields only an $O(\e)$ convergence rate in the natural energy norm, the refined estimate in \eqref{ess_opt} gives
\[
\|v^\e-v\|_{L^2(0,T;H^{-s_1})}\ls \e^2.
\]
This is again consistent with Appendix \ref{app:op_rate}, where the forcing term in the fluid equation exhibits a second-order cancellation and, under well-prepared initial data, the formal expansion yields
\[
v^\e-v=O(\e^2).
\]
Hence, this rate should also be viewed as formally optimal in the negative Sobolev topology.

In this sense, the strong norms in Theorem \ref{thm_main} capture the first-order convergence of the full kinetic-fluid system, whereas the negative Sobolev estimates in \eqref{ess_opt} recover the essentially optimal second-order behavior of the macroscopic density and fluid velocity predicted by the formal asymptotics.

\end{remark}

\begin{remark}
The estimate for $v^\e-v$ in \eqref{ess_opt} only requires the bound for $\|v\|_{L^\infty(0,T;H^{s_1})}$, while the estimate for $\rho_{f^\e}-\rho$ requires one additional derivative. For this reason, Theorem \ref{thm_main} is stated under the stronger assumption $v\in L^\infty(0,T;H^{s_1+1}(\Omega))$. See Section \ref{sec_weak-conv} for the detailed derivation of these negative Sobolev estimates.
\end{remark}

%
%
%
%
%

 \subsection{Organization of the paper}

The remainder of the paper is organized as follows. Section \ref{sec_pre} collects several analytical ingredients used throughout the proof. We first establish a weighted Trudinger--Moser inequality, which provides the additional integrability needed to control mixed kinetic--fluid interaction terms. We then derive a family of uniform a priori estimates, including moment and entropy bounds, that will be used repeatedly in the relative entropy analysis. Section \ref{sec_stab} is devoted to the relative entropy structure of the light-particle limit. We reformulate the limiting system in a suitable relaxation form and derive the key relative entropy inequality comparing the kinetic--fluid system with the limiting advection--diffusion / incompressible Navier--Stokes system. In Section \ref{sec_proof}, we complete the proof of Theorem \ref{thm_main}. We first close the relative entropy inequality and obtain the quantitative convergence estimates in the well-prepared regime. We then derive refined convergence rates for the fluid velocity and density in negative Sobolev spaces. Finally, the appendices address complementary aspects of the asymptotic analysis. Appendix \ref{app:op_rate} presents the formal diffusion limit and identifies the expansion rates suggested by a Hilbert expansion. Appendix \ref{app:ext} establishes the existence theory for the limiting system, including a continuation criterion and global-in-time existence on the torus, thereby justifying the regularity assumptions imposed in Theorem \ref{thm_main}.

%
%
%
%

\section{Preliminaries}\label{sec_pre}

%
%
%
%
%
\subsection{Weighted Trudinger--Moser inequality}

In this subsection, we establish a weighted Trudinger--Moser inequality that will play a central role throughout the relative entropy analysis. The drag force induces a strong coupling between the kinetic and fluid components, and several nonlinear interaction terms arise in which the particle density $\rho_{f^\e}$ is controlled only in $L\log L$, whereas the fluid velocity $v^\e$ possesses $H^1$ regularity.  Such mixed products cannot be handled by standard Sobolev embeddings in two dimensions, and closing the entropy estimates requires an additional gain of integrability.  The weighted Trudinger--Moser inequality provides exactly this mechanism by restoring the critical integrability needed to control expressions of the form $\rho_{f^\e} |v^\e|^2$ and similar coupling
terms.

This phenomenon is intrinsically two-dimensional: the Trudinger--Moser inequality compensates for the failure of the embedding $H^1(\Omega) \hookrightarrow L^\infty(\Omega)$ in dimension two, and it is precisely at this point that the restriction $\Omega\subset\R^2$ enters our analysis.  In the periodic setting, a related argument appeared implicitly in \cite{GJV04}; however, extending the analysis to the whole space requires a refined, weighted version of the inequality that will allow us to propagate spatial moments and control the entropy production uniformly in time.  

For completeness, and since our framework treats both the periodic and whole-space domains, we now state the weighted Trudinger--Moser inequality in a form suitable for our subsequent estimates.

 \begin{lemma}\label{lem:TM}
Let $T>0$, $\Omega =\T^2$ or $\R^2$, and let $p\in L^\infty([0,T]; \mathcal{P}(\om) \cap L\log L(\om))$ and $\phi\in L^2(0,T;H^1(\Omega))$.  Then there exists a constant $C>0$, depending only on $\Omega$, such that
\[ 
\int_0^T  \int_{\Omega} p |\phi|^2\,dxdt \leq C\lt(1+\sup_{0<t<T}\int_{\Omega} p |\ln p|\,dx\rt) 
\|\phi\|_{L^2(0,T;H^1(\Omega))}^2.
\]
\end{lemma}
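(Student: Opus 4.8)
The plan is to prove the inequality by combining the classical Trudinger--Moser inequality with a Young-type inequality that trades a logarithmic factor in $p$ against an exponential of $|\phi|^2$. Concretely, I would use the duality between the Orlicz spaces generated by $\Phi(s) = s\log(e+s)$ and its complementary function, which (up to constants) behaves like $e^t - 1$. The pointwise Young inequality then gives, for $\lambda > 0$ to be chosen,
\[
\int_\Omega p|\phi|^2\,dx \le \frac{C}{\lambda}\int_\Omega p\log(e+p)\,dx + \frac{C}{\lambda}\int_\Omega \lt(e^{\lambda|\phi|^2} - 1\rt)\,dx.
\]
The first term is controlled by $1 + \sup_t \int_\Omega p|\ln p|\,dx$ after noting that $\int_\Omega p\,dx = 1$ so that $\int p\log(e+p) \ls 1 + \int p|\ln p|$. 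The second term is exactly where Trudinger--Moser enters: on $\T^2$ (or on a ball, in the whole-space case after localization) one has $\int_\Omega e^{\alpha|\phi|^2/\|\nabla\phi\|_{L^2}^2}\,dx \le C$ for $\alpha$ below the sharp Moser constant $4\pi$, provided $\phi$ has zero mean or one works with $\phi$ modulo constants. Choosing $\lambda \sim 1/\|\phi(t)\|_{H^1}^2$ makes the exponent subcritical and bounds the exponential term by $C\|\phi(t)\|_{H^1}^2$; integrating in $t$ then yields the claim.

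First I would handle the torus. Write $\bar\phi(t) = \fint_\Omega \phi(t)\,dx$ and split $|\phi|^2 \le 2|\phi - \bar\phi|^2 + 2|\bar\phi|^2$. The constant part contributes $2|\bar\phi(t)|^2 \int_\Omega p\,dx = 2|\bar\phi(t)|^2 \le C\|\phi(t)\|_{L^2}^2$ using $\int p\,dx = 1$; this is harmless after time integration. For the mean-zero part $\psi := \phi - \bar\phi$, apply the Young inequality above with $\lambda = \alpha/\|\nabla\psi(t)\|_{L^2}^2$ for a fixed $\alpha < 4\pi$, so that $\int_\Omega(e^{\lambda|\psi|^2}-1)\,dx \le C$ by Moser's inequality (valid since $\psi$ has zero mean); this produces the bound $\int_\Omega p|\psi|^2\,dx \le C\|\nabla\psi(t)\|_{L^2}^2\lt(1 + \sup_t\int_\Omega p\log(e+p)\,dx\rt)$, with the convention that if $\|\nabla\psi(t)\|_{L^2} = 0$ the term vanishes. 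Integrating over $[0,T]$ and using $\|\nabla\psi\|_{L^2} = \|\nabla\phi\|_{L^2} \le \|\phi\|_{H^1}$ gives the desired estimate on $\T^2$.

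For $\Omega = \R^2$, I would reduce to the torus/bounded case by a covering and localization argument: cover $\R^2$ by unit cubes $\{Q_k\}$ with bounded overlap, and on each $Q_k$ apply a local Trudinger--Moser inequality (the embedding $H^1(Q_k) \hookrightarrow \exp L^2(Q_k)$ with constants uniform in $k$ by translation invariance). The subtlety is that the local version requires controlling $\|\phi\|_{L^2(Q_k)}$ in addition to $\|\nabla\phi\|_{L^2(Q_k)}$, but summing over $k$ these reassemble into $\|\phi\|_{H^1(\R^2)}^2$; similarly $\sum_k \int_{Q_k} p\log(e+p)\,dx \ls \int_{\R^2} p\log(e+p)\,dx + \#\{k\}$-type terms must be avoided — here one uses that $s\log(e+s)$ is superadditive-compatible and that $\sum_k \int_{Q_k} p\,dx = 1$, so only finitely many cubes carry appreciable mass-weighted contribution, while on the rest $\int_{Q_k} p|\phi|^2 \le \|\phi\|_{L^\infty(Q_k)}^2\int_{Q_k}p$ is not available; instead one keeps the Orlicz duality localized and sums. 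The main obstacle is precisely this whole-space bookkeeping: ensuring the constant $C$ stays independent of the (infinite) number of cubes, which hinges on the fact that the right-hand side $\|\phi\|_{L^2(0,T;H^1(\R^2))}^2$ already sums the local pieces and that the Orlicz norm of $p$ is globally controlled; a clean way is to note $\int_{\R^2} p|\phi|^2 \le \|p\|_{L\log L}\,\||\phi|^2\|_{\exp L}$ directly on $\R^2$ via the global Orlicz duality, with $\||\phi|^2\|_{\exp L(\R^2)} \ls \|\phi\|_{H^1(\R^2)}^2$ following from the global two-dimensional Trudinger--Moser embedding, thereby bypassing the covering entirely. I would present the torus proof in detail and then indicate this Orlicz-duality shortcut for $\R^2$.
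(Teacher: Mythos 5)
Your approach is essentially the same as the paper's: a pointwise Fenchel--Young (Orlicz-duality) inequality that trades $p\log p$ against $e^{\lambda|\phi|^2}-1$, followed by the mean-zero Trudinger--Moser inequality on $\T^2$ with $\lambda\sim\|\nabla\phi\|_{L^2}^{-2}$ and the sharp whole-space (Ruf) inequality on $\R^2$ with $\lambda\sim\|\phi\|_{H^1}^{-2}$; the covering detour you propose for $\R^2$ is unnecessary, and your final "Orlicz-duality shortcut" is precisely what the paper does. One small remark: the paper works with the exact conjugate pair $(e^\eta-1,\ \xi\ln\xi-\xi+1)$ and must therefore split $\{p>1\}$ from $\{p\le 1\}$ on $\R^2$ to avoid integrating the constant $+1$ over an infinite-measure domain, whereas your choice of $\Phi(s)=s\log(e+s)$ sidesteps that bookkeeping at the cost of introducing unspecified constants in the Young inequality, which you would need to pin down to make the argument fully rigorous.
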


\begin{proof}
Fix $t\in(0,T)$ and suppress $t$ in the notation. We argue pointwise in time and then integrate.

\medskip
\noindent\textbf{Step 1 (Fenchel--Young with an exponential Orlicz pair).}
Let $\Phi(\eta):=e^\eta-1$ for $\eta\in\R$. Then its convex conjugate is
\[
\Phi^*(\xi)=\sup_{\eta\in\R}\,(\xi\eta-\Phi(\eta))=(\xi\ln\xi-\xi)+1\quad\text{for }\xi\ge0,
\]
with the convention $0\ln 0=0$. Hence, by Fenchel--Young inequality, for all $\xi\ge0$ and $\eta\in\R$, we obtain
\[
\xi\eta  \le  (e^\eta-1)+\xi\ln\xi-\xi+1.
\]
Fix $\beta>0$ and apply this pointwise with $\xi=p(x)$ and $\eta=\beta\,\psi(x)^2$ (so $\eta\ge0$). We obtain
\[
p \beta \psi^2  \le  (e^{\beta\psi^2}-1)+p\ln p - p + 1,
\]
and thus
\bq\label{eq:Fen}
p \psi^2
 \le \frac{1}{\beta}\lt(e^{\beta\psi^2}-1\rt) + \frac{1}{\beta}\,p\ln p + \frac{1}{\beta}\,(1-p).
\eq

Two consequences will be crucial below:
\begin{itemize}
\item On sets where $p>1$ we have $1-p\le0$, so the last term in \eqref{eq:Fen} can be dropped and we get
\bq\label{eq:Fen1}
p \psi^2 \le \frac{1}{\beta}\lt(e^{\beta\psi^2}-1\rt) + \frac{1}{\beta} p\ln p
\quad\text{on }\{p>1\}.
\eq
\item On sets where $p\le1$ we simply use $p \psi^2\le \psi^2$, avoiding any constant term altogether.
\end{itemize}

\medskip
\noindent\textbf{Step 2 (Choice of $\psi$ and Trudinger--Moser).}
We distinguish the domain.

\smallskip
\emph{Case A: $\Omega=\T^2$.}
Set $\psi=\phi^\circ:=\phi-\bar\phi$, where 
\[
\bar\phi:=\frac1{|\T^2|}\int_{\T^2}\phi\,dx.
\]
Then we get 
\[
\int_{\T^2}\phi^\circ=0, \quad \|\phi^\circ\|_{L^2}\lesssim\|\nabla\phi\|_{L^2}, \quad \mbox{and} \quad \|\nabla\phi^\circ\|_{L^2}=\|\nabla\phi\|_{L^2}.
\]
By the mean-zero Trudinger--Moser inequality on $\T^2$ \cite{GT83},  there exist $\alpha_0>0$ and $K_{\T^2}>0$ such that
\[
\int_{\T^2}\exp \lt(\alpha_0\,\frac{|\phi^\circ|^2}{\|\nabla\phi\|_{L^2}^2}\rt) dx  \le  K_{\T^2}.
\]
If $\|\nabla\phi\|_{L^2}=0$, then $\phi$ is constant and we handle it at the end of the torus case. Otherwise set
\[
\beta:=\frac{\alpha_0}{\|\nabla\phi\|_{L^2}^2}.
\]
Then, we have
\[
\int_{\T^2}\lt(e^{\beta|\psi|^2}-1\rt) dx
=\int_{\T^2}\lt(\exp \lt(\alpha_0\,\frac{|\phi^\circ|^2}{\|\nabla\phi\|_{L^2}^2}\rt)-1\rt) dx
 \le  K_{\T^2}-|\T^2|.
\]

\smallskip
\emph{Case B: $\Omega=\R^2$.}
Set $\psi=\phi$. By the sharp whole-space Trudinger--Moser inequality (e.g. \cite{LR08, R05}), there exist $\alpha_*\in(0,4\pi)$ and $K_{\R^2}>0$ such that, whenever $\|g\|_{H^1(\R^2)}\le1$,
\[
\intr\lt(e^{\alpha_*|g|^2}-1\rt) dx  \le  K_{\R^2}.
\]
If $\|\phi\|_{H^1(\R^2)}=0$, we are done. Otherwise set 
\[
g:=\frac\phi{\|\phi\|_{H^1}} \quad \mbox{and} \quad \beta:=\frac{\alpha_*}{\|\phi\|_{H^1}^2}. 
\]
Then, we have
\bq\label{eq:TM-whole}
\intr\lt(e^{\beta|\psi|^2}-1\rt) dx = \intr\lt(e^{\alpha_*|g|^2}-1\rt) dx  \le  K_{\R^2}.
\eq

\medskip
\noindent\textbf{Step 3 (Estimate on $\T^2$).}
Integrate \eqref{eq:Fen} over $\T^2$ with $\psi=\phi^\circ$ and $\beta=\frac{\alpha_0}{\|\nabla\phi\|_{L^2}^2}$. Using the bound for the exponential term, we obtain
\[
\int_{\T^2} p |\phi^\circ|^2\,dx \le \frac{K_{\T^2}-|\T^2|}{\beta}+\frac{1}{\beta}\int_{\T^2}p|\ln p|\,dx+\frac{1}{\beta}\int_{\T^2}(1-p)\,dx \leq \frac{K_{\T^2}}{\beta}+\frac{1}{\beta}\int_{\T^2}p|\ln p|\,dx.
\]
Thus, we find
\[
\int_{\T^2}p|\phi^\circ|^2\,dx \le \frac{C}{\alpha_0} \|\nabla\phi\|_{L^2}^2+\frac{\|\nabla\phi\|_{L^2}^2}{\alpha_0}\int_{\T^2}p|\ln p|\,dx = C\lt(1+\int_{\T^2}p|\ln p|\,dx\rt) \|\nabla \phi\|_{L^2(\T^2)}^2.
\]
On the other hand, we observe
\[
\int_{\T^2}p |\bar\phi|^2\,dx = |\bar\phi|^2 \le C\,\|\phi\|_{L^2}^2,
\]
and thus
\[
\int_{\T^2}p|\phi|^2\,dx
 \le C\lt(1+\int_{\T^2}p|\ln p|\,dx\rt) \|\phi\|_{H^1(\T^2)}^2.
\]
If $\|\nabla\phi\|_{L^2}=0$ (i.e.\ $\phi$ is constant), the same bound follows from the estimate on the average term. Integrating over $(0,T)$ gives the desired result.

\medskip
\noindent\textbf{Step 4 (Estimate on $\R^2$).}
Split $\R^2=\{p>1\}\cup\{p\le1\}$. On $\{p>1\}$, apply \eqref{eq:Fen1} with $\psi=\phi$ and $\beta=\frac{\alpha_*}{\|\phi\|_{H^1}^2}$, and use \eqref{eq:TM-whole}:
\[
\int_{\{p>1\}}p |\phi|^2\,dx \le \frac{1}{\beta}\intr\lt(e^{\beta|\phi|^2}-1\rt) dx
+\frac{1}{\beta}\intr p|\ln p|\,dx
 \le 
\frac{K_{\R^2}}{\alpha_*}\,\|\phi\|_{H^1}^2
+\frac{\|\phi\|_{H^1}^2}{\alpha_*}\intr p|\ln p|\,dx.
\]
On $\{p\le1\}$, simply, we observe
\[
\int_{\{p\le1\}}p |\phi|^2\,dx\le \intr|\phi|^2\,dx \le \|\phi\|_{H^1}^2. 
\]
Combining, we arrive at
\[
\intr p\,|\phi|^2\,dx \le C\lt(1+\intr p|\ln p|\,dx\rt) \|\phi\|_{H^1(\R^2)}^2,
\]
and integrating it over $(0,T)$ completes the proof.
\end{proof}

%
%
%
%
%
\subsection{Uniform bound estimates}
In this part, we collect several a priori estimates for the system \eqref{main1}. The computations below are carried out at a formal level, but each estimate can be rigorously justified under the classical approximation framework, see, e.g. \cite{CKL11, GJV04, Ham98}. Since our analysis also requires uniform control in the whole-space setting, beyond the periodic domain treated in \cite{GJV04}, we provide the derivations for completeness while referring to the above works for the full justification.

A key quantity is the free energy functional
\[
\mathscr{F}[f,v] :=  \iint_{\om \times \R^2} \lt(\frac{|\xi|^2}2 + \log f \rt) f\,dxd\xi  + \int_\om \frac{|v|^2}{2}\,dx,
\]
whose dissipation plays a central role in the compactness and stability analysis that follows. The next lemma records its basic monotonicity along smooth solutions to \eqref{main1}. This identity corresponds precisely to the energy inequality used in the definition of weak entropy solutions (Definition \ref{def_weak}), but is stated here in the smooth setting to keep the formal computations clear. 

From now on, we write $\rho^\e := \rho_{f^\e}$  and $u^\e := u_{f^\e}$ for simplicity.

\begin{lemma}\label{lem:energy} Let $T>0$, and let $(f^\e, v^\e)$ be sufficiently regular solutions to the equation \eqref{main1} on the time interval $[0,T]$. Then, we have
\begin{align*}
& \mathscr{F}[f^\e(t),v^\e(t)] + \frac1{\e^2}\int_0^t \iint_{\om \times \R^2} \frac1{f^\e}|  \nabla_\xi f^\e + (\xi - \e v^\e) f^\e|^2\,dxd\xi ds  + \int_0^t \int_\om |\nabla v^\e|^2\,dx ds =  \mathscr{F}[f^\e_0,v^\e_0],
\end{align*}
for $t \in [0,T]$.
\end{lemma}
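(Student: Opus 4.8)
The plan is to differentiate the free energy $\mathscr{F}[f^\e(t),v^\e(t)]$ along smooth solutions of \eqref{main1}, handle the kinetic and fluid contributions separately, and exploit the fact that the two drag-induced cross terms cancel exactly by the structure of the coupling.

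First I would test the kinetic equation in \eqref{main1} against $\tfrac{|\xi|^2}{2}+\log f^\e+1$ and integrate over $\om\times\R^2$, which gives
\[
\frac{d}{dt}\iint_{\om\times\R^2}\Bigl(\tfrac{|\xi|^2}{2}+\log f^\e\Bigr)f^\e\,dxd\xi = -\frac1\e\,\mathsf{T}+\frac1{\e^2}\,\mathsf{D},
\]
where $\mathsf{T}$ and $\mathsf{D}$ are the transport and Fokker--Planck contributions. Since $\xi$ is $x$-independent, $\bigl(\tfrac{|\xi|^2}{2}+\log f^\e+1\bigr)\xi\cdot\nabla_x f^\e=\nabla_x\cdot\bigl(\xi\bigl(\tfrac{|\xi|^2}{2}f^\e+f^\e\log f^\e\bigr)\bigr)$ is a perfect $x$-divergence, so $\mathsf{T}=0$ (by periodicity on $\T^2$, and by the spatial decay coming from the $L^1_2\cap L\log L$ and $\||x|f^\e\|_{L^1}$ controls on $\R^2$). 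For $\mathsf{D}$, integrating by parts in $\xi$ and using $\nabla_\xi\bigl(\tfrac{|\xi|^2}{2}+\log f^\e\bigr)=\xi+\tfrac{\nabla_\xi f^\e}{f^\e}=\tfrac1{f^\e}\bigl(\nabla_\xi f^\e+(\xi-\e v^\e)f^\e\bigr)+\e v^\e$ together with $\iint v^\e\cdot\nabla_\xi f^\e\,dxd\xi=0$, one finds
\[
\frac1{\e^2}\,\mathsf{D}=-\frac1{\e^2}\iint_{\om\times\R^2}\frac1{f^\e}\bigl|\nabla_\xi f^\e+(\xi-\e v^\e)f^\e\bigr|^2\,dxd\xi-\frac1\e\int_\om v^\e\cdot\Bigl(\intr(\xi-\e v^\e)f^\e\,d\xi\Bigr)dx.
\]

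Next I would test the Navier--Stokes equation in \eqref{main1} against $v^\e$: the convective and pressure terms drop by $\nabla_x\cdot v^\e=0$, the viscous term integrates by parts into $-\int_\om|\nabla_x v^\e|^2\,dx$, and the drag term produces $+\tfrac1\e\int_\om v^\e\cdot(\intr(\xi-\e v^\e)f^\e\,d\xi)\,dx$. Adding this to the kinetic identity, the two cross terms cancel identically and we are left with
\[
\frac{d}{dt}\mathscr{F}[f^\e,v^\e]+\frac1{\e^2}\iint_{\om\times\R^2}\frac1{f^\e}\bigl|\nabla_\xi f^\e+(\xi-\e v^\e)f^\e\bigr|^2\,dxd\xi+\int_\om|\nabla_x v^\e|^2\,dx=0,
\]
which, integrated over $[0,t]$, gives the stated inequality (indeed an equality in the smooth regime).

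I expect the main obstacle to be not the algebra — which is routine once one recognizes that weighting the Fokker--Planck flux by $1/f^\e$ extracts exactly the dissipation functional, and that the $|\xi|^2/2$ weight is what generates the $\e v^\e$ term responsible for the cancellation with the fluid drag — but rather the rigorous justification of the integrations by parts and the finiteness of the quantities involved: the vanishing of $\mathsf{T}$ on $\R^2$, the integrability of $f^\e\log f^\e$ (controlled by the $|\xi|^2$ and $|x|$ moments together with the entropy), and the well-definedness of $\tfrac1{f^\e}|\nabla_\xi f^\e+(\xi-\e v^\e)f^\e|^2$. Following \cite{CKL11, GJV04, Ham98}, this is carried out on a regularized system where all terms are smooth and integrable, and the inequality then descends to weak entropy solutions by lower semicontinuity of the free energy and dissipation functionals; I would only sketch this standard step here.
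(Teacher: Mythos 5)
Your proposal is correct and follows essentially the same route as the paper: differentiate the free energy, compute the kinetic and fluid contributions separately, and observe that the two drag-induced cross terms $\pm\tfrac1\e\iint v^\e\cdot(\xi-\e v^\e)f^\e$ cancel exactly upon summing, after which integration in time gives the claim. The paper simply states the two balance identities and sums them, whereas you have spelled out the underlying integrations by parts (the choice of test function $\tfrac{|\xi|^2}{2}+\log f^\e+1$, the vanishing of the $x$-transport term, and the algebraic rewriting of $\nabla_\xi(\tfrac{|\xi|^2}{2}+\log f^\e)$ that extracts the dissipation functional plus the cross term) and the regularization remark needed to pass to weak entropy solutions — all consistent with the paper's sketch.
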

\begin{proof} Differentiating the kinetic contribution yields
\begin{align*}
&\frac{d}{dt}\iint_{\om \times \R^2} \lt(\frac{|\xi|^2}2 + \log f^\e\rt) f^\e\,dxd\xi + \frac1{\e^2}\iint_{\om \times \R^2} \frac1{f^\e}|  \nabla_\xi f^\e + (\xi - \e v^\e) f^\e|^2\,dxd\xi \cr
&\quad =  \frac1\e \iint_{\om \times \R^2} v^\e \cdot (\e v^\e - \xi) f^\e\,dxd\xi. 
\end{align*}
A direct computation from the fluid equation gives
\[
\frac{d}{dt} \int_\om \frac{|v^\e|^2}{2}\,dx +  \int_\om |\nabla v^\e|^2\,dx =  \frac1\e \iint_{\om \times \R^2} v^\e \cdot (\xi - \e v^\e)f^\e\,dxd\xi .
\]
Summing both identities and integrating in time concludes the desired result.
\end{proof}
\begin{remark}\label{rmk_diss} The dissipation term naturally decomposes into a kinetic relaxation part and a hydrodynamic alignment part:
\[
\iint_{\om \times \R^2} \frac1{f^\e}|  \nabla_\xi f^\e + (\xi - \e v^\e) f^\e|^2\,dxd\xi  = \iint_{\om \times \R^2} \frac1{f^\e}|  \nabla_\xi f^\e + (\xi - u^\e) f^\e|^2\,dxd\xi  + \int_\om \rho^\e |u^\e - \e v^\e|^2\,dx.
\]
The first term measures the deviation of $f^\e$ from the local Maxwellian $M_{\rho^\e,u^\e}$, while the second controls the mismatch between the kinetic momentum $u^\e$ and the scaled fluid velocity $\e v^\e$.
\end{remark}

The next step is to obtain uniform control of the spatial moments of $f^\e$ and of the $L\log L$-entropy of the density $\rho^\e$. Such bounds do not appear in the periodic setting treated in \cite{GJV04}, but they become essential in the whole-space regime considered here. In particular, the control of $\intrr |x| f^\e\,dxd\xi$ compensates for the absence of confinement and replaces the compactness mechanisms that are automatically available on $\T^2$.

A further difficulty specific to our setting arises from the coupling term $\int_0^T \int_\Omega \rho^\e |v^\e|^2\,dxdt$, which must be controlled uniformly in $\e$. The weighted Trudinger--Moser inequality established in the previous subsection plays a crucial role here: it provides the necessary integrability to handle mixed kinetic-fluid terms that cannot be controlled by standard Sobolev embeddings in two dimensions.

We now derive uniform bounds on the first moment of $f^\e$ and on the $L\log L$-entropy of $\rho^\e$.

 \begin{lemma} \label{lem:mom} Let $T>0$, and let $(f^\e, v^\e)$ be the weak entropy solution to \eqref{main1} on the time interval $[0,T]$ in the sense of Definition \ref{def_weak}. Then, we have
\[
\sup_{0 \leq t \leq T}\iint_{\om \times \R^2} |x| f^\e(t)\,dxd\xi + \sup_{0 \leq t \leq T} \int_\Omega \rho^\e |\log \rho^\e|\,dx \leq C  \lt(1 + \|xf^\e_0\|_{L^1} + \mathscr{F}[f^\e_0, v^\e_0]\rt),
\]
where $C>0$ depends only on $T$.
\end{lemma}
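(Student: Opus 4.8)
The plan is to estimate the two quantities separately but within a single Grönwall-type argument, using the energy bound of Lemma~\ref{lem:energy} as the backbone. First I would differentiate the spatial moment: since $\pa_t f^\e + \frac1\e \xi\cdot\nabla_x f^\e = \frac1{\e^2}\nabla_\xi\cdot(\cdots)$ and the right-hand side integrates to zero against $|x|$ (it is a $\xi$-divergence), one gets
\[
\frac{d}{dt}\iint_{\om\times\R^2} |x| f^\e\,dxd\xi = \frac1\e \iint_{\om\times\R^2} \frac{x}{|x|}\cdot\xi\, f^\e\,dxd\xi \le \frac1\e \iint_{\om\times\R^2} |\xi| f^\e\,dxd\xi.
\]
The factor $\frac1\e$ is dangerous, so the point is that $\iint |\xi| f^\e\,dxd\xi$ must itself carry a compensating power of $\e$. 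This is exactly what the dissipation in Lemma~\ref{lem:energy} provides: writing $\xi = \e v^\e + (\xi - \e v^\e)$ and using $\iint |\xi - \e v^\e| f^\e \le \e\,\|\,\tfrac1{f^\e}|\nabla_\xi f^\e + (\xi-\e v^\e)f^\e|\,\|_{L^2}\,\|f^\e\|_{L^1}^{1/2} \cdot(\text{moment bound})$ together with the uniform-in-time bound on $\|v^\e\|_{L^2}$ and on $\iint|\xi|^2 f^\e$, one absorbs the $\frac1\e$ and is left with $\frac{d}{dt}\iint|x|f^\e \lesssim 1 + \iint|\xi|^2 f^\e + \|v^\e\|_{L^2}^2 + \e^{-2}\|\,\tfrac1{f^\e}|\cdots|\,\|_{L^2}^2$, whose time integral is controlled by $\mathscr{F}[f^\e_0,v^\e_0]$ via Lemma~\ref{lem:energy}. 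Integrating in time yields the first bound.

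For the entropy of $\rho^\e$, the standard tool is that $\rho^\e|\log\rho^\e|$ differs from $\rho^\e\log\rho^\e$ only by a contribution supported on $\{\rho^\e<1\}$, which is bounded below using $-\rho^\e\log\rho^\e \le C(1+|x|)\rho^\e + C e^{-|x|/2}$ (the Carleman-type trick) after splitting large and small $x$; thus it suffices to control $\int_\Omega \rho^\e\log\rho^\e\,dx$ from above and invoke the spatial moment bound just obtained. The macroscopic entropy $\int\rho^\e\log\rho^\e$ is in turn controlled by the full kinetic entropy $\iint f^\e\log f^\e$ plus a kinetic-temperature term: by the decomposition $\int\rho^\e\log\rho^\e\,dx \le \iint f^\e\log\frac{f^\e}{M_{\rho^\e,u^\e}}\,dxd\xi + \iint f^\e\log M_{\rho^\e,u^\e}\,dxd\xi$ and the explicit form of $M_{\rho^\e,u^\e}$ one bounds the last integral by $C(1 + \iint|\xi|^2 f^\e + \int\rho^\e|u^\e|^2)$; the relative entropy $\iint f^\e\log(f^\e/M_{\rho^\e,u^\e})$ and $\iint f^\e|\log f^\e|$ are controlled through Lemma~\ref{lem:energy} and the moment bounds, in the usual way that kinetic $L\log L$ bounds are propagated. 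The only genuinely new term is $\int_\Omega \rho^\e|u^\e|^2\,dx = \int_\Omega \frac{|m_{f^\e}|^2}{\rho^\e}\,dx \le \iint|\xi|^2 f^\e\,dxd\xi$ by Cauchy--Schwarz, so it is already under control.

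The main obstacle, as the text flags before the statement, is the mixed coupling term $\int_0^T\int_\Omega \rho^\e|v^\e|^2\,dxdt$, which must appear when one converts $\iint|\xi - \e v^\e|f^\e$ and the drag forcing into usable quantities, and which is critical in two dimensions — $\rho^\e$ is only in $L\log L$ while $v^\e$ is only in $H^1$, so no Hölder/Sobolev pairing closes it. This is precisely where Lemma~\ref{lem:TM} is invoked: it gives $\int_0^T\int_\Omega \rho^\e|v^\e|^2\,dxdt \le C(1 + \sup_t\int_\Omega\rho^\e|\log\rho^\e|\,dx)\,\|v^\e\|_{L^2(0,T;H^1)}^2$, but note the appearance of $\sup_t\int\rho^\e|\log\rho^\e|$ on the right — exactly the quantity we are trying to bound. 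The resolution is to run the moment estimate and the entropy estimate \emph{together} in a single Grönwall loop: one shows $\frac{d}{dt}(\text{moment} + \text{entropy}) \le C + C\,(\text{moment}+\text{entropy})\cdot g(t)$ with $g\in L^1(0,T)$ coming from $\|v^\e\|_{H^1}^2$ and the dissipation, so that the Trudinger--Moser constant is absorbed through the exponential factor $\exp(\int_0^T g)$, which is finite and $\e$-uniform by Lemma~\ref{lem:energy}. Setting up this coupled inequality carefully — making sure every $\e^{-1}$ or $\e^{-2}$ is matched by dissipation and that the Trudinger--Moser feedback enters only multiplicatively against an $L^1$-in-time weight — is the crux of the argument.
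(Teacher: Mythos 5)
The overall architecture you describe — differentiate the moment, control the $\e^{-1}$ factor by dissipation, bound the $L\log L$-entropy of $\rho^\e$ by the moment and free energy, and close the self-consistent loop created by Trudinger--Moser — is exactly the paper's strategy, and you correctly identify the TM feedback as the crux. However, two of the central technical steps are not correct as you have written them.

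First, the moment-derivative estimate. The paper does not bound $\iint|\xi-\e v^\e|f^\e\,dxd\xi$; that quantity is \emph{not} controlled by the dissipation. What is controlled is the signed integral $\iint\tfrac{x}{|x|}\cdot(\xi-\e v^\e)f^\e\,dxd\xi$, because one may freely add $\iint\tfrac{x}{|x|}\cdot\nabla_\xi f^\e\,dxd\xi=0$ to it, after which Cauchy--Schwarz against $\iint f^\e = 1$ produces exactly $\bigl(\tfrac{1}{\e^2}\iint\tfrac1{f^\e}|\nabla_\xi f^\e + (\xi-\e v^\e)f^\e|^2\bigr)^{1/2}$. Your version, which replaces the integrand by its absolute value, needs instead a bound on $\iint|\xi-\e v^\e|^2 f^\e$, and you duly invoke ``the uniform-in-time bound on $\iint|\xi|^2 f^\e$''. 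That bound is Lemma~\ref{lem:kin}, whose proof uses the present Lemma~\ref{lem:mom}; invoking it here makes the argument circular. The $\nabla_\xi f^\e$-insertion trick is precisely what eliminates any need for the second moment.

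Second, the entropy decomposition. You write $\int\rho^\e\log\rho^\e \le \iint f^\e\log\tfrac{f^\e}{M_{\rho^\e,u^\e}} + \iint f^\e\log M_{\rho^\e,u^\e}$, whose right-hand side is just $\iint f^\e\log f^\e$; this inequality does not hold (it is off by $\log(2\pi)+\tfrac12\iint|\xi-u^\e|^2 f^\e - \mathscr{H}$). Worse, you then propose to bound $\iint f^\e\log M_{\rho^\e,u^\e}$ by moments, but that term literally contains $\int\rho^\e\log\rho^\e$, so the step is circular. The paper instead uses the algebraic identity \eqref{eq_exp} with $(\rho,u)=(1,0)$, which cleanly isolates $\rho^\e\log\rho^\e$ as one summand and yields $\int\rho^\e\log\rho^\e \le \mathscr{F}[f^\e,v^\e]+\log(2\pi)$ after dropping a nonnegative relative entropy. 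Your treatment of the negative part via the Carleman trick is fine. Finally, for closing the loop: you propose a Gr\"onwall-in-time argument, whereas the paper integrates in time first, applies Young's inequality with a parameter $\delta$, and absorbs the resulting $\delta\cdot(\sup_t\int\rho^\e|\log\rho^\e|)$ term by choosing $\delta\sim (1+\mathscr{F}_0)^{-1}$. Your Gr\"onwall route could be made to work (TM does hold pointwise in time), but it is more delicate and not what the paper does; in any case it only matters once the two preceding steps are repaired.
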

\begin{proof} We begin by differentiating the first spatial moment. Using the kinetic equation in \eqref{main1} yields
\begin{align*}
\frac{d}{dt}\iint_{\om \times \R^2} |x| f^\e\,dxd\xi &= \frac1\e \iint_{\om \times \R^2} \frac{x \cdot \xi}{|x|}f^\e\,dxd\xi \cr
&= \frac1\e \iint_{\om \times \R^2} \frac{x}{|x|}  \cdot \lt( (\xi - \e v^\e)f^\e + \nabla_\xi f^\e \rt) dxd\xi + \iint_{\om \times \R^2} \frac{x}{|x|} \cdot v^\e f^\e\,dxd\xi\cr
&=: I + II,
\end{align*}
where 
\[
|I| \leq  \frac1\e \iint_{\om \times \R^2} |(\xi - \e v^\e)f^\e + \nabla_\xi f^\e|\,dxd\xi \leq \lt( \frac1{\e^2}\iint_{\om \times \R^2} \frac1{f^\e}|  \nabla_\xi f^\e + (\xi - \e v^\e) f^\e|^2\,dxd\xi\rt)^\frac12
\]
and
\[
|II| \leq \lt(\int_\om \rho^\e|v^\e|^2\,dx\rt)^\frac12.
\]
Combining these bounds with Lemma \ref{lem:energy}, we obtain for any $\delta>0$,
\begin{align*}
 \iint_{\om \times \R^2} |x| f^\e(t)\,dxd\xi &  \leq \iint_{\om \times \R^2} |x| f^\e_0\,dxd\xi  + T^\frac12\lt( \frac1{\e^2}\int_0^T\iint_{\om \times \R^2} \frac1{f^\e}|  \nabla_\xi f^\e + (\xi - \e v^\e) f^\e|^2\,dxd\xi dt\rt)^\frac12\cr
&\quad + T^\frac12\lt(\int_0^T\int_\om \rho^\e|v^\e|^2\,dxdt\rt)^\frac12\cr
&\quad \leq \iint_{\om \times \R^2} |x| f^\e_0\,dxd\xi  + T^\frac12 \mathscr{F}[f^\e_0, v^\e_0]^\frac12 + \frac{T}{4\delta} + \delta\int_0^T\int_\om \rho^\e|v^\e|^2\,dxdt.
\end{align*}
To estimate the last term, we use Lemmas \ref{lem:TM} and \ref{lem:energy}:
\begin{align*}
\delta\int_0^T\int_\om \rho^\e|v^\e|^2\,dxdt &\leq C\delta\lt(1+\sup_{0 \leq t \leq T}\int_{\Omega} \rho^\e |\log \rho^\e|\,dx\rt) 
\|v^\e\|_{L^2(0,T;H^1(\Omega))}^2 \cr
&\leq C \delta  \mathscr{F}[f^\e_0, v^\e_0] \lt(1+\sup_{0\le t\le T}\int_{\Omega} \rho^\e |\log \rho^\e|\,dx\rt),
\end{align*}
where $C > 0$ is independent of $\e$. Thus,
\begin{align}\label{est:mom}
\begin{aligned}
\iint_{\om \times \R^2} |x| f^\e(t)\,dxd\xi &\leq \iint_{\om \times \R^2} |x| f^\e_0\,dxd\xi  + T^\frac12 \mathscr{F}[f^\e_0, v^\e_0]^\frac12 + \frac{T}{4\delta} \cr
&\quad + C \delta  \mathscr{F}[f^\e_0, v^\e_0] \lt(1+\sup_{0 \leq t \leq T}\int_{\Omega} \rho^\e |\log \rho^\e|\,dx\rt).
\end{aligned}
\end{align}

We now estimate the $L\log L$-entropy of $\rho^\e$. Choosing $(\rho,u)=(1,0)$ in the identity \eqref{eq_exp} gives
\[
  \intr f^\e \log \frac{f^\e}{M_{1, 0}}\,d\xi =  \intr f^\e \log \frac{f^\e}{M_{\rho^\e, u^\e}}\,d\xi +   \rho^\e \log \rho^\e + \frac{\rho^\e}{2}|u^\e|^2.
\]
Since
\[
  \intr f^\e \log \frac{f^\e}{M_{1, 0}}\,d\xi = \intr f^\e \log f^\e\,d\xi + \log (2\pi) \rho^\e + \frac12 \intr |\xi|^2 f^\e\,d\xi,
\]
we deduce
\[
 \intr f^\e \log \frac{f^\e}{M_{\rho^\e, u^\e}}\,d\xi + \rho^\e \log \rho^\e = \intr f^\e \log f^\e\,d\xi + \log (2\pi) \rho^\e + \frac12 \intr |\xi - u^\e|^2 f^\e\,d\xi.
\]
Using the identity
\[
\intr f^\e\,d\xi = \intr M_{\rho^\e, u^\e}\,d\xi,  
\]
we get
\[
\intr f^\e \log \frac{f^\e}{M_{\rho^\e, u^\e}}\,d\xi = \mathscr{H}[f^\e|M_{\rho^\e, u^\e}] \geq 0,
\]
and since
\[
\frac12 \intr |\xi - u^\e|^2 f^\e\,d\xi \leq \frac12 \intr |\xi|^2 f^\e\,d\xi,
\]
we have
\[
\int_\om \rho^\e \log \rho^\e \,dx \leq \mathscr{F}[f^\e,v^\e] + \log(2\pi) \leq \mathscr{F}[f^\e_0,v^\e_0] + \log(2\pi).
\]
To control the negative part, note that
\[
2\int_\Omega \rho^\e \log_-\rho^\e\,dx \leq \int_\Omega |x|\rho^\e \,dx + C \int_\Omega e^{-|x|}\,dx \leq \intrr |x| f^\e\,dxd\xi + C,
\]
where $\log_- \rho = \max\{0, -\log \rho\}$. Thus, we obtain
\[
\int_\Omega \rho^\e |\log \rho^\e|\,dx  \leq C + C\mathscr{F}[f^\e_0,v^\e_0] + C\intrr |x| f^\e\,dxd\xi.
\]
This combined with \eqref{est:mom} yields
\[
\sup_{0 \leq t \leq T}\iint_{\om \times \R^2} |x| f^\e(t)\,dxd\xi \leq C  \lt(1 + \|xf^\e_0\|_{L^1} + \mathscr{F}[f^\e_0, v^\e_0]\rt),
\]
for $\delta > 0$ small enough (such as $\delta = \frac{1}{M(1+\mathscr{F}[f^\e_0, v^\e_0])}$ with $M \gg 1$), where $C>0$ depends on $T$. Hence, we have
\[
\sup_{0 \leq t \leq T}\int_\Omega \rho^\e |\log \rho^\e|\,dx \leq C  \lt(1 + \|xf^\e_0\|_{L^1} + \mathscr{F}[f^\e_0, v^\e_0]\rt).
\]
This completes the proof.
\end{proof}
 
In the lemma below, we establish a uniform control on the second-order velocity moment of $f^\e$, which will later play an important role in the relative entropy estimates and compactness arguments.
\begin{lemma} \label{lem:kin} Let $T>0$, and let $(f^\e, v^\e)$ be the weak entropy solution to \eqref{main1} on the time interval $[0,T]$ in the sense of Definition \ref{def_weak}. Then,  we have
\[
\sup_{0 \leq t \leq T}\iint_{\om \times \R^2} |\xi|^2 f^\e(t)\,dxd\xi \leq C,
\]
where $C>0$ depends on $T$, $\||x|f^\e_0\|_{L^1}$, and $\mathscr{F}[f^\e_0, v^\e_0]$.
\end{lemma}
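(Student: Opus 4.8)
The plan is to control $\sup_{0\le t\le T}\iint |\xi|^2 f^\e\,dxd\xi$ by differentiating the kinetic energy moment and absorbing the singular terms using the entropy dissipation from Lemma~\ref{lem:energy}. Starting from the kinetic equation in \eqref{main1}, multiply by $|\xi|^2$ and integrate: the transport term vanishes after integrating in $x$, and the Fokker--Planck operator produces $\frac1{\e^2}\iint |\xi|^2 \nabla_\xi\cdot(\nabla_\xi f^\e + (\xi-\e v^\e)f^\e)\,dxd\xi$. Integrating by parts in $\xi$ twice, this becomes $\frac1{\e^2}\iint (2\cdot 2\, f^\e - 2\xi\cdot(\xi-\e v^\e)f^\e)\,dxd\xi = \frac1{\e^2}\iint (4 f^\e - 2|\xi|^2 f^\e + 2\e\, \xi\cdot v^\e f^\e)\,dxd\xi$, so one obtains a differential inequality of the schematic form
\[
\frac{d}{dt}\iint |\xi|^2 f^\e\,dxd\xi = \frac{2}{\e^2}\left(2 - \iint|\xi|^2 f^\e\,dxd\xi\right) + \frac{2}{\e}\iint \xi\cdot v^\e f^\e\,dxd\xi,
\]
using the unit-mass normalization. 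The crucial structural point is the favorable sign of the relaxation term $-\frac{2}{\e^2}\iint|\xi|^2 f^\e\,dxd\xi$, which should dominate the remaining pieces.

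The main obstacle is the singular-in-$\e$ cross term $\frac2\e \iint \xi\cdot v^\e f^\e\,dxd\xi$, which cannot be bounded directly in terms of $\iint|\xi|^2 f^\e\,dxd\xi$ without producing an $\e^{-1}$ loss. The remedy is to exploit the entropy dissipation: write $\xi f^\e = (\xi - \e v^\e)f^\e + \e v^\e f^\e$ and, using $\nabla_\xi f^\e + (\xi-\e v^\e)f^\e$ as the dissipation density, estimate $\frac1\e\iint|\xi - \e v^\e| f^\e\,dxd\xi$ by Cauchy--Schwarz against $\left(\frac1{\e^2}\iint \frac1{f^\e}|\nabla_\xi f^\e + (\xi - \e v^\e)f^\e|^2\,dxd\xi\right)^{1/2}$, up to terms involving $\iint f^\e$ and $\iint |\nabla_\xi\sqrt{f^\e}|^2$; the latter is itself controlled by the dissipation plus lower-order velocity moments. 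Alternatively, and more cleanly, integrate the identity above in time directly: since $\int_0^T \frac1{\e^2}\iint \frac1{f^\e}|\nabla_\xi f^\e + (\xi - \e v^\e)f^\e|^2\,dxd\xi\,dt \le \mathscr{F}[f^\e_0, v^\e_0]$ by Lemma~\ref{lem:energy}, one controls $\int_0^T \frac1{\e^2}\int_\Omega \rho^\e|u^\e - \e v^\e|^2\,dx\,dt$ via Remark~\ref{rmk_diss}, and $\int_0^T\int_\Omega \rho^\e|v^\e|^2\,dx\,dt$ via Lemma~\ref{lem:TM} together with the $L\log L$ bound on $\rho^\e$ from Lemma~\ref{lem:mom}. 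These combine to bound $\frac1\e \int_0^T\iint |\xi\cdot v^\e| f^\e\,dxd\xi\,dt$ uniformly in $\e$.

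Concretely, I would proceed as follows. First, record the moment identity above rigorously at the formal level (justified as in \cite{CKL11, GJV04, Ham98}). Second, rewrite the cross term using Remark~\ref{rmk_diss}: $\frac1\e\iint \xi\cdot v^\e f^\e\,dxd\xi = \iint v^\e\cdot u^\e \rho^\e\,dx$ is not quite right dimensionally, so instead split $\frac1\e \xi f^\e$ and bound $\left|\frac1\e \iint \xi\cdot v^\e f^\e\,dxd\xi\right| \le \frac12\iint|v^\e|^2 f^\e\,dxd\xi + \frac1{2\e^2}\iint|\xi|^2 f^\e\,dxd\xi$ by Young's inequality — but this reintroduces the $\e^{-2}$ factor with the wrong sign balance. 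The correct split is $\frac1\e\xi = \frac1\e(\xi - \e v^\e) + v^\e$, giving $\frac1\e\iint\xi\cdot v^\e f^\e\,dxd\xi = \frac1\e\iint (\xi - \e v^\e)\cdot v^\e f^\e\,dxd\xi + \iint|v^\e|^2 f^\e\,dxd\xi$; the first term is $\le \frac1{2\e^2}\int_\Omega\rho^\e|u^\e - \e v^\e|^2\,dx + \frac12\int_\Omega\rho^\e|v^\e|^2\,dx$ after Cauchy--Schwarz (using $\iint(\xi-\e v^\e)f^\e\,d\xi = m_{f^\e} - \e\rho^\e v^\e = \rho^\e(u^\e - \e v^\e)$), and both resulting quantities are time-integrable uniformly in $\e$ by the dissipation bound and Lemma~\ref{lem:TM}. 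Third, apply Grönwall's inequality to $y(t) := \iint|\xi|^2 f^\e(t)\,dxd\xi$ against the differential inequality $y'(t) \le \frac{C}{\e^2}(1 - y(t))^+ + g(t)$ with $\int_0^T g(t)\,dt \le C$ — the relaxation term only helps, so $y(t) \le y(0) + \int_0^t g(s)\,ds + C$, yielding the claimed uniform bound since $y(0) = \iint|\xi|^2 f^\e_0\,dxd\xi$ is uniformly bounded by hypothesis. The only genuine subtlety is ensuring every term competing with the relaxation is handled in the time-integrated sense rather than pointwise, so that no $\e^{-1}$ survives.
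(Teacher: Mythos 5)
Your argument is correct in outline, but it follows a genuinely different and more elaborate route than the paper's. The paper's proof is static: since $\mathscr{F}[f^\e,v^\e]$ already contains $\iint\frac{|\xi|^2}{2}f^\e$, one only needs to control the negative part of the entropy $\iint f^\e\log f^\e$. The paper does this with the standard split $\log f^\e = \log_+ f^\e - \log_- f^\e$ and the pointwise comparison $f\log_- f \le f\bigl(\frac{|x|}{2}+\frac{|\xi|^2}{4}\bigr) + C e^{-|x|/4-|\xi|^2/8}$, which converts the negative part into a half of the kinetic energy plus the first spatial moment (bounded by Lemma~\ref{lem:mom}) plus a constant. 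This is purely an algebra-of-functionals argument: no time-differentiation, no Gr\"onwall, no use of the dissipation identity beyond Lemma~\ref{lem:energy}. By contrast, you differentiate the second moment, exhibit the relaxation structure $y'=\frac{4}{\e^2}-\frac{2}{\e^2}y+\frac{2}{\e}\iint\xi\cdot v^\e f^\e$, split the cross term as $\frac{1}{\e}\xi=\frac{1}{\e}(\xi-\e v^\e)+v^\e$, and absorb the result via the time-integrated dissipation from Lemma~\ref{lem:energy} / Remark~\ref{rmk_diss} and the Trudinger--Moser bound from Lemma~\ref{lem:TM}. Both strategies ultimately rest on the same a priori estimates, but your route additionally reveals the $O(\e^2)$ equilibration of the second moment toward its Maxwellian value $d=2$, at the price of more machinery.

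Two small points to tidy up. First, your schematic inequality $y'\le \frac{C}{\e^2}(1-y)^+ + g$ is not quite the right bookkeeping: the exact identity in $d=2$ is $y'=\frac{2}{\e^2}(2-y)+\frac{2}{\e}\iint\xi\cdot v^\e f^\e$, so the relaxation threshold is $2$, not $1$, and the forcing $\frac{4}{\e^2}$ is not sign-definite; one should run the exponential Gr\"onwall $y(t)\le y(0)e^{-2t/\e^2} + 2(1-e^{-2t/\e^2}) + \int_0^t e^{-2(t-s)/\e^2} g(s)\,ds \le y(0)+2+\int_0^T g\,ds$ rather than discard the singular term by fiat. Second, your conclusion bounds $y(t)$ by $y(0)=\iint|\xi|^2 f^\e_0\,dxd\xi$, whereas the lemma states dependence only on $T$, $\||x|f^\e_0\|_{L^1}$ and $\mathscr{F}[f^\e_0,v^\e_0]$; to match the stated constant you would need to convert $y(0)$ into those quantities, which is precisely the paper's log-split trick applied at $t=0$ — so you cannot fully avoid it, you merely relocate it.
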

\begin{proof}  We use the standard decomposition 
\[
\log f^\e = \log_+ f^\e - \log_- f^\e, \quad \log_- f = \max\{0, -\log f\}. 
\]
The negative part can be compared with a Gaussian weight as follows:
\[
\iint_{\om \times \R^2} f^\e \log_{-} f^\e\,dx d\xi \leq \iint_{\om \times \R^2} f^\e \lt( \frac{|x|}{2} + \frac{|\xi|^2}{4} \rt)dx d\xi + C\iint_{\om \times \R^2} e^{-\frac{|x|}{4} - \frac{|\xi|^2}{8}}\,dxd\xi.
\]
This implies
\[
 \iint_{\om \times \R^2} f^\e \lt( \frac{|x|}{2} + \frac{|\xi|^2}{4} + \log_+ f^\e \rt)dx d\xi \leq \mathscr{F}[f^\e,v^\e] +   \iint_{\om \times \R^2} |x| f^\e\,dxd\xi + C_0,
\]
where  
\[
C_0 := C\iint_{\om \times \R^2} e^{-\frac{|x|}{4} - \frac{|\xi|^2}{8}}\,dxd\xi.
\]
We finally apply Lemmas \ref{lem:energy} and \ref{lem:mom} to conclude the desired bound estimate. 
\end{proof}

%
%
%
%
%
%

 
\section{Relative entropy inequality for the light-particle limit}\label{sec_stab}

%
%
%
%
%
%

In order to derive the stability structure of the light-particle limit, we first rewrite the macroscopic system \eqref{lim_light} in a form that exposes its relaxation character.

Introducing the auxiliary velocity
\bq\label{bdd_u}
{\rm u}_\e = \e (v - \nabla \log \rho),
\eq
the limit equations \eqref{lim_light} may be equivalently written as 
\begin{align}\label{int_light}
\begin{aligned}
&\pa_t \rho + \frac1\e \nabla \cdot (\rho {\rm u}_\e) =0, \cr
&\rho \pa_t {\rm u}_\e + \frac1\e \rho {\rm u}_\e \cdot \nabla {\rm u}_\e + \frac1\e \nabla  \rho = \frac1\e\rho v - \frac1{\e^2} \rho {\rm u}_\e + \rho {\rm e}_\e,\cr
&\pa_t v + v \cdot \nabla v + \nabla_x P -  \Delta v = 0, \cr
&\nabla \cdot v = 0,
\end{aligned}
\end{align}
where 
\[
\rho {\rm e}_\e = \rho \pa_t {\rm u}_\e + \frac1\e \rho {\rm u}_\e \cdot \nabla {\rm u}_\e.
\]
We notice that ${\rm u}_\e$ is an $O(\e)$ quantity and thus,
\[
\|{\rm e}_\e\|_{L^\infty} \leq \e \lt( \|\pa_t  (v - \nabla \log \rho)\|_{L^\infty} + \|(v - \nabla \log \rho) \cdot \nabla (v - \nabla \log \rho)\|_{L^\infty} \rt).
\]

\begin{proposition}\label{prop_key_light} Let $T>0$, and let $(f^\e, v^\e)$ be the weak entropy solution to \eqref{main1} on the time interval $[0,T]$ in the sense of Definition \ref{def_weak}. Let $(\rho,v)$ be the sufficiently regular solution to the system \eqref{lim_light}. Then we have
\begin{align*}
&\mathscr{H}[f^\e | M_{\rho, {\rm u}_\e}](t) + \frac12\|(v^\e - v)(t)\|_{L^2}^2 + \frac1{\e^2  } \int_0^t \iint_{\om \times \R^2} \frac1{f^\e} |  \nabla_\xi f^\e + (\xi - u^\e)f^\e|^2\,dxd\xi ds   \cr
&\quad + \frac{1}{\e^2}\int_0^t \int_\om \rho^\e | ({\rm u}_\e - u^\e) - \e (v - v^\e)|^2\,dx ds + \int_0^t \int_\om |\nabla (v^\e - v)|^2\,dxds  \cr
&\quad \leq  \mathscr{H}[f^\e | M_{\rho, {\rm u}_\e}](0) + \frac12\|(v^\e - v)(0)\|_{L^2}^2 \cr
&\quad \quad - \frac1{\e  }\int_0^t \iint_{\om \times \R^2} \lt(f^\e ({\rm u}_\e-\xi) \otimes ({\rm u}_\e-\xi)  -    f^\e \mathbb{I}_2\rt): \nabla {\rm u}_\e\,dxd\xi ds   +   \int_0^t \int_\om (v^\e - v)\otimes (v^\e - v) : \nabla v\,dxds\cr
&\quad \quad   +  \int_0^t \int_\om (\rho^\e - \rho) (v^\e - v) \cdot \nabla \log \rho\,dxds + \int_0^t \int_\om \rho^\e ({\rm u}_\e - u^\e) \cdot {\rm e}_\e\,dxds,
\end{align*}
for a.e. $t \in [0,T]$.
\end{proposition}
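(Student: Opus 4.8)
The plan is to run a relative-entropy / modulated-energy argument, using the free-energy inequality of Lemma~\ref{lem:energy} as the backbone and adding to it the time derivative of a collection of ``cross'' terms. First I would expand the modulated entropy algebraically: since $\iint_{\om\times\R^2} M_{\rho,{\rm u}_\e}\,dxd\xi=\int_\om\rho\,dx=1=\iint_{\om\times\R^2} f^\e\,dxd\xi$, the linear piece in the definition of $\mathscr{H}$ vanishes and
\[
\mathscr{H}[f^\e | M_{\rho, {\rm u}_\e}] = \iint_{\om\times\R^2}\Big(\tfrac12|\xi|^2 + \log f^\e\Big)f^\e\,dxd\xi + \log(2\pi) - \int_\om \rho^\e\log\rho\,dx - \int_\om m^\e\cdot {\rm u}_\e\,dx + \tfrac12\int_\om\rho^\e|{\rm u}_\e|^2\,dx,
\]
so that $\mathscr{H}[f^\e | M_{\rho, {\rm u}_\e}] + \tfrac12\|v^\e - v\|_{L^2}^2$ is $\mathscr{F}[f^\e,v^\e]$ plus the constant $\log(2\pi)$ plus the cross terms $-\int_\om\rho^\e\log\rho - \int_\om m^\e\cdot{\rm u}_\e + \tfrac12\int_\om\rho^\e|{\rm u}_\e|^2 - \int_\om v^\e\cdot v + \tfrac12\|v\|_{L^2}^2$. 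For the $\mathscr{F}$ part I would invoke Lemma~\ref{lem:energy} together with the splitting in Remark~\ref{rmk_diss}, which already supplies the dissipation terms $\frac1{\e^2}\iint_{\om\times\R^2}\frac1{f^\e}|\nabla_\xi f^\e + (\xi-u^\e)f^\e|^2$, $\frac1{\e^2}\int_\om\rho^\e|u^\e-\e v^\e|^2$ and $\int_\om|\nabla v^\e|^2$.

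Next I would differentiate each cross term in time, using the weak formulations of both systems. From the kinetic equation one reads off the local balances $\partial_t\rho^\e + \frac1\e\nabla\cdot m^\e = 0$ and $\partial_t m^\e + \frac1\e\nabla\cdot(\int_{\R^2}\xi\otimes\xi f^\e\,d\xi) = -\frac1{\e^2}m^\e + \frac1\e\rho^\e v^\e$; the fluid equation supplies $\partial_t v^\e$ with forcing $\frac1\e m^\e - \rho^\e v^\e$; and the limiting fields are handled through the reformulated system~\eqref{int_light}, which is the key device here. Rather than differentiating ${\rm u}_\e=\e(v-\nabla\log\rho)$ directly (which would require extra regularity of $\rho$), I use the algebraic identity $\rho\,\partial_t{\rm u}_\e + \frac1\e\rho\,{\rm u}_\e\cdot\nabla{\rm u}_\e + \frac1\e\nabla\rho = \frac1\e\rho v - \frac1{\e^2}\rho{\rm u}_\e + \rho{\rm e}_\e$, with ${\rm e}_\e$ kept symbolic. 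Integrating by parts (the pressure terms drop by $\nabla\cdot v^\e = \nabla\cdot v = 0$; the test functions $\log\rho$, $\tfrac12|\xi-{\rm u}_\e|^2$, ${\rm u}_\e$ are admissible thanks to the moment and $L\log L$ bounds of Lemmas~\ref{lem:mom}--\ref{lem:kin}), every contribution is expressed through $\rho^\e$, $m^\e$, $\int_{\R^2}\xi\otimes\xi f^\e\,d\xi$, $v^\e$, $\rho$, ${\rm u}_\e$, $v$, and ${\rm e}_\e$.

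The decisive step is the bookkeeping. I would first modulate the kinetic stress, writing $\int_{\R^2}\xi\otimes\xi f^\e\,d\xi = \int_{\R^2}(\xi-{\rm u}_\e)\otimes(\xi-{\rm u}_\e)f^\e\,d\xi + m^\e\otimes{\rm u}_\e + {\rm u}_\e\otimes m^\e - \rho^\e{\rm u}_\e\otimes{\rm u}_\e$; the last three tensors, contracted with $\nabla{\rm u}_\e$, cancel exactly against the $({\rm u}_\e\cdot\nabla){\rm u}_\e$ and $(m^\e\cdot\nabla){\rm u}_\e$ contributions produced by $-\frac{d}{dt}\int m^\e\cdot{\rm u}_\e$ and $\frac12\frac{d}{dt}\int\rho^\e|{\rm u}_\e|^2$, leaving precisely $-\frac1\e\iint_{\om\times\R^2}(f^\e({\rm u}_\e-\xi)\otimes({\rm u}_\e-\xi)-f^\e I_d):\nabla{\rm u}_\e$ once the term $\frac1\e\int\rho^\e\nabla\cdot{\rm u}_\e$ coming from $-\frac{d}{dt}\int\rho^\e\log\rho$ is folded in. Next, all remaining singular terms must disappear: the $O(\e^{-1})$ pieces weighted by $\nabla\log\rho$ and by $v$ cancel in pairs between the entropy and the limit equations, while the surviving $O(\e^{-2})$ pieces $-\frac1{\e^2}\int\rho^\e|u^\e-\e v^\e|^2 + \frac2{\e^2}\int m^\e\cdot{\rm u}_\e - \frac1{\e^2}\int\rho^\e|{\rm u}_\e|^2$ are recombined, via $m^\e=\rho^\e u^\e$ and a completion of the square, into the coercive term $\frac1{\e^2}\int\rho^\e|({\rm u}_\e-u^\e)-\e(v-v^\e)|^2$ plus lower-order remainders. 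Collecting what is left --- the Navier--Stokes relative-entropy term $\int(v^\e-v)\otimes(v^\e-v):\nabla v$, the density-velocity coupling $\int(\rho^\e-\rho)(v^\e-v)\cdot\nabla\log\rho$ (where the $\rho$-contribution can be inserted or removed freely since $\int\rho(v^\e-v)\cdot\nabla\log\rho=-\int(\nabla\cdot(v^\e-v))\rho=0$), and the consistency error $\int\rho^\e({\rm u}_\e-u^\e)\cdot{\rm e}_\e$ arising from $-\int m^\e\cdot{\rm e}_\e + \int\rho^\e{\rm u}_\e\cdot{\rm e}_\e$ --- together with forming $-\int|\nabla(v^\e-v)|^2$ out of $-\int|\nabla v^\e|^2$, $-\int|\nabla v|^2$ and the viscous cross terms, yields the stated inequality after integration in time (an inequality, rather than equality, because for weak entropy solutions Lemma~\ref{lem:energy} holds only in its a.e.\ form).

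The main obstacle I anticipate is precisely this cancellation bookkeeping: one must track a large number of $O(\e^{-1})$ and $O(\e^{-2})$ terms and verify that every non-coercive singular contribution annihilates, so that only $O(1)$ remainders --- controllable later by a Gronwall argument together with the weighted Trudinger--Moser estimate of Lemma~\ref{lem:TM} --- persist; getting this algebra to close is the whole reason the light-particle scaling works. A secondary, more technical point is the rigorous justification of the formal manipulations for weak entropy solutions, in particular that $\log\rho$, $\tfrac12|\xi-{\rm u}_\e|^2$ and ${\rm u}_\e$ are legitimate test functions in the distributional formulation; this is where the uniform bounds of Section~\ref{sec_unif} are used, and it is made precise by working first with the regularized solutions of \cite{CKL11, GJV04, Ham98} and passing to the limit.
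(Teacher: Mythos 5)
Your proposal is correct and amounts to essentially the same computation as the paper's: the paper differentiates $\iint f^\e \log(f^\e/M_{\rho,{\rm u}_\e})$ directly and splits the product rule into an $f^\e$-derivative piece and an $M$-derivative piece, while you isolate the known free-energy decay (Lemma~\ref{lem:energy} plus Remark~\ref{rmk_diss}) and differentiate the cross terms $-\int\rho^\e\log\rho - \int m^\e\cdot{\rm u}_\e + \tfrac12\int\rho^\e|{\rm u}_\e|^2 - \int v^\e\cdot v + \tfrac12\|v\|_{L^2}^2$; these two organizations produce the same intermediate expressions and rely on the same key devices, namely the reformulated system~\eqref{int_light}, the modulation of the kinetic stress around ${\rm u}_\e$, and the completion of the square merging $\frac1{\e^2}\int\rho^\e|u^\e-\e v^\e|^2$ with the $O(\e^{-1})$ cross pieces into $\frac1{\e^2}\int\rho^\e|({\rm u}_\e-u^\e)-\e(v-v^\e)|^2$. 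Your version has the mild advantage that the free-energy inequality is invoked directly in the form given by Definition~\ref{def_weak}, and the caveat that the cross terms still need rigorous justification by regularization for weak entropy solutions (which you correctly flag) is the same caveat the paper itself acknowledges.
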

\begin{proof}
We provide only a formal derivation of the relative entropy identity.  At the formal level, the computation below yields an exact balance law. For entropy weak solutions, however, one must apply standard truncation and regularization procedures (see, e.g., \cite{CKL11, GJV04, Ham98}).  In this process, several integration-by-parts steps are justified only for regularized approximations, and the passage to the limit typically uses lower semicontinuity of the dissipation terms.  Consequently, the rigorous relative entropy balance for weak solutions holds in the form of an inequality rather than an identity.

We begin with
\begin{align*}
\frac{d}{dt}\iint_{\om \times \R^2} f^\e \log \lt( \frac{f^\e}{ M_{\rho, {\rm u}_\e}} \rt)dxd\xi &= \iint_{\om \times \R^2} \pa_t f^\e \lt(  \log f^\e - \log M_{\rho, {\rm u}_\e}\rt)dxd\xi \cr
&\quad + \iint_{\om \times \R^2} f^\e \lt( \frac{\pa_t f^\e}{f^\e}  - \frac{\pa_t M_{\rho, {\rm u}_\e}}{M_{\rho, {\rm u}_\e}}\rt) dxd\xi\cr
&=: I + II,
\end{align*}
where
\begin{align*}
I &= \frac1\e \iint_{\om \times \R^2} \lt(  \log f^\e - \log M_{\rho, {\rm u}_\e}\rt)   \lt( - \xi \cdot \nabla  f^\e    + \frac1\e \nabla_\xi \cdot ( \nabla_\xi f^\e + (\xi - \e v^\e) f^\e)\rt) dxd\xi,\cr
II &= - \iint_{\om \times \R^2} f^\e \lt( \frac{\pa_t \rho}{\rho}  - ({\rm u}_\e-\xi) \cdot \pa_t {\rm u}_\e\rt)  dxd\xi.
\end{align*}
For $I$, we use the kinetic equation for $f^\e$ to get
\begin{align*}
I = I_1 + I_2,
\end{align*}
with
\begin{align*}
I_1 &:= \frac1\e \iint_{\om \times \R^2} \xi f^\e \cdot \lt( \frac{\nabla  f^\e}{f^\e}  - \frac{\nabla  M_{\rho, {\rm u}_\e}}{M_{\rho, {\rm u}_\e}}\rt) dxd\xi \cr
&= - \frac1\e \iint_{\om \times \R^2} \xi f^\e \cdot \lt( \nabla  \log \rho - (\nabla  {\rm u}_\e) ({\rm u}_\e-\xi) \rt) dxd\xi\cr
&= - \frac1\e \int_\om \rho^\e u^\e \cdot \nabla  \log \rho\,dx + \frac1{\e }\iint_{\om \times \R^2} \xi f^\e \cdot (\nabla  {\rm u}_\e)({\rm u}_\e-\xi)\,dxd\xi 
\end{align*}
and
\begin{align*}
I_2 &:= - \frac1{\e^2} \iint_{\om \times \R^2} \lt(  \nabla_\xi f^\e + (\xi - \e v^\e) f^\e\rt) \cdot \lt( \frac{\nabla_\xi f^\e}{f^\e} - \frac{\nabla_\xi M_{\rho, {\rm u}_\e}}{M_{\rho, {\rm u}_\e}}\rt) dxd\xi\cr
&= - \frac1{\e^2} \iint_{\om \times \R^2} \lt(  \nabla_\xi f^\e + (\xi - \e v^\e) f^\e\rt) \cdot \lt( \frac{\nabla_\xi f^\e}{f^\e} -  ({\rm u}_\e-\xi) \rt) dxd\xi.
\end{align*}
On the other hand, we find from Remark \ref{rmk_diss} that
\begin{align*}
&\iint_{\om \times \R^2} \lt(  \nabla_\xi f^\e + (\xi - \e v^\e) f^\e\rt) \cdot \lt( \frac{\nabla_\xi f^\e}{f^\e} +  (\xi - {\rm u}_\e) \rt) dxd\xi\cr
&\quad =  \iint_{\om \times \R^2} \lt(  \nabla_\xi f^\e + (\xi - \e v^\e) f^\e\rt) \cdot \lt( \frac{\nabla_\xi f^\e}{f^\e} +   (\xi - \e v^\e) \rt)  dxd\xi \cr
& \quad + \iint_{\om \times \R^2} \lt(  \nabla_\xi f^\e + (\xi - \e v^\e) f^\e\rt) \cdot \lt( \e v^\e - {\rm u}_\e\rt) dxd\xi\cr
&\quad =  \iint_{\om \times \R^2} \frac1{f^\e} | \nabla_\xi f^\e + (\xi - \e v^\e) f^\e|^2\,dxd\xi - \int_\om \rho^\e (u^\e - \e v^\e)\cdot ({\rm u}_\e - \e v^\e)\,dx\cr
&\quad =     \iint_{\om \times \R^2} \frac1{f^\e}|  \nabla_\xi f^\e + (\xi - u^\e) f^\e|^2\,dxd\xi +   \int_\om \rho^\e (u^\e - \e v^\e) \cdot (u^\e - {\rm u}_\e)\,dx.
\end{align*}
This implies
\begin{align*}
I &= - \frac1\e\int_\om \rho^\e u^\e \cdot \nabla  \log \rho\,dx +\frac1{\e } \iint_{\om \times \R^2} f^\e \xi  \cdot (\nabla  {\rm u}_\e)({\rm u}_\e-\xi)\,dxd\xi \cr
&\quad  - \frac1{\e^2  } \iint_{\om \times \R^2} \frac1{f^\e} |  \nabla_\xi f^\e + (\xi - u^\e)f^\e|^2\,dxd\xi + \frac1{\e^2  } \int_\om \rho^\e (u^\e - \e v^\e) \cdot ({\rm u}_\e - u^\e)\,dx.
\end{align*}
For $II$, we get
\[
II = II_1 + II_2,
\]
where we use the reformulated system \eqref{int_light} to obtain
\[
II_1 :=  \frac1\e \int_\om \frac{\rho^\e}{\rho} \nabla \cdot (\rho {\rm u}_\e) \,dx=  \frac1\e\int_\om \rho^\e {\rm u}_\e \cdot \nabla \log \rho\,dx + \frac1\e \int_\om \rho^\e \mathbb{I}_2 : \nabla  {\rm u}_\e\,dx  
\]
and
\begin{align*}
II_2 &:=   \iint_{\om \times \R^2} f^\e ({\rm u}_\e-\xi) \cdot \lt( -  \frac1\e({\rm u}_\e \cdot \nabla) {\rm u}_\e -  \frac1\e \nabla \log \rho +  \frac1\e v -  \frac1{\e^2} {\rm u}_\e + {\rm e}_\e \rt)  dxd\xi\cr
&= - \frac1{\e } \iint_{\om \times \R^2} f^\e {\rm u}_\e \cdot (\nabla {\rm u}_\e)( {\rm u}_\e-\xi)\,dxd\xi - \frac1\e \int_\om \rho^\e ({\rm u}_\e - u^\e) \cdot \nabla \log \rho\,dx \cr
&\quad  + \frac1{\e } \iint_{\om \times \R^2} f^\e({\rm u}_\e-\xi) \cdot v\,dxd\xi - \frac1{\e^2  } \int_\om \rho^\e ({\rm u}_\e - u^\e) \cdot {\rm u}_\e \,dx +  \int_\om \rho^\e ({\rm u}_\e - u^\e) \cdot {\rm e}_\e\,dx.
\end{align*}
Combining the estimates of $II_1$ and $II_2$ gives
\begin{align*}
II &= \frac1\e \int_\om \rho^\e u^\e \cdot \nabla  \log \rho\,dx - \frac1{\e} \iint_{\om \times \R^2} f^\e {\rm u}_\e \cdot (\nabla  {\rm u}_\e) ({\rm u}_\e - \xi)\,dxd\xi + \frac1\e \int_\om \rho^\e \mathbb{I}_2 : \nabla  {\rm u}_\e\,dx\cr
&\quad   + \frac1{\e^2 }\int_\om \rho^\e({\rm u}_\e-u^\e) \cdot (\e v - {\rm u}_\e)\,dx +  \int_\om \rho^\e ({\rm u}_\e - u^\e) \cdot {\rm e}_\e\,dx.
\end{align*}
Hence, we have
\begin{align*}
&\frac{d}{dt}\iint_{\om \times \R^2} f^\e \log \lt( \frac{f^\e}{ M_{\rho, {\rm u}_\e}} \rt) dxd\xi + \frac1{\e^2  } \iint_{\om \times \R^2} \frac1{f^\e} |  \nabla_\xi f^\e + (\xi - u^\e)f^\e|^2\,dxd\xi + \frac1{\e^2  } \int_\om \rho^\e |{\rm u}_\e - u^\e|^2\,dx  \cr
&\quad =  - \frac1{\e } \iint_{\om \times \R^2} f^\e ({\rm u}_\e-\xi) \otimes ({\rm u}_\e-\xi) : \nabla {\rm u}_\e \,dxd\xi+ \frac1\e \int_\om \rho^\e \mathbb{I}_2 : \nabla {\rm u}_\e\,dx\cr
& \quad   + \frac1{\e  } \int_\om \rho^\e ({\rm u}_\e - u^\e) \cdot (v - v^\e)\,dx   + \int_\om \rho^\e ({\rm u}_\e - u^\e) \cdot {\rm e}_\e\,dx.
\end{align*}

For the fluid part, we deduce
\begin{align*}
&\frac12\frac{d}{dt}\int_\om |v^\e - v|^2\,dx +  \int_\om |\nabla (v^\e - v)|^2\,dx \cr
&\quad = \int_\om (v^\e - v)\otimes (v^\e - v) : \nabla v\,dx + \frac1\e \int_\om \rho^\e (v^\e - v) \cdot  (u^\e - \e v^\e )\,dx.
\end{align*}

Combining both identities implies
\begin{align*}
&\frac{d}{dt}\lt( \iint_{\om \times \R^2} f^\e \log \lt( \frac{f^\e}{ M_{\rho, {\rm u}_\e}} \rt) dxd\xi  + \frac1{2 }\int_\om |v^\e - v|^2\,dx \rt) \cr
&\quad + \frac1{\e^2  } \iint_{\om \times \R^2} \frac1{f^\e} |  \nabla_\xi f^\e + (\xi - u^\e)f^\e|^2\,dxd\xi + \frac1{\e^2  } \int_\om \rho^\e |{\rm u}_\e - u^\e|^2\,dx +    \int_\om |\nabla (v^\e - v)|^2\,dx  \cr
&\quad =  - \frac1{\e } \iint_{\om \times \R^2} f^\e ({\rm u}_\e-\xi) \otimes ({\rm u}_\e-\xi) : \nabla {\rm u}_\e\,dxd\xi + \frac1\e \int_\om \rho^\e \mathbb{I}_2 : \nabla  {\rm u}_\e \,dx+  \int_\om (v^\e - v)\otimes (v^\e - v) : \nabla v\,dx\cr
&\quad \quad   + \frac1{\e  } \int_\om \rho^\e ({\rm u}_\e - u^\e) \cdot (v - v^\e)\,dx  + \frac1{\e } \int_\om \rho^\e (v^\e - v) \cdot  (u^\e - \e v^\e )\,dx +  \int_\om \rho^\e ({\rm u}_\e - u^\e) \cdot {\rm e}_\e\,dx.
\end{align*}
Here we further estimate
\begin{align*}
&\frac1{\e  } \int_\om \rho^\e ({\rm u}_\e - u^\e) \cdot (v - v^\e)\,dx  + \frac1{\e } \int_\om \rho^\e (v^\e - v) \cdot  (u^\e - \e v^\e)\,dx \cr
&\quad = \frac2{\e  } \int_\om \rho^\e ({\rm u}_\e - u^\e) \cdot (v - v^\e)\,dx + \frac1{\e } \int_\om \rho^\e (v^\e - v) \cdot  ({\rm u}_\e - \e v^\e)\,dx\cr
&\quad = - \frac{1}{\e^2  }\int_\om \rho^\e | ({\rm u}_\e - u^\e) - \e (v - v^\e)|^2\,dx + \frac{1}{\e^2  } \int_\om \rho^\e |{\rm u}_\e - u^\e|^2\,dx\cr
& \quad  +   \int_\om \rho^\e | v - v^\e|^2\,dx + \frac1{\e } \int_\om \rho^\e (v^\e - v) \cdot  ({\rm u}_\e - \e v^\e)\,dx\cr
&\quad = - \frac{1}{\e^2  }\int_\om \rho^\e | ({\rm u}_\e - u^\e) - \e (v - v^\e)|^2\,dx + \frac{1}{\e^2  } \int_\om \rho^\e |{\rm u}_\e - u^\e|^2\,dx  + \frac1{\e } \int_\om \rho^\e (v^\e - v) \cdot  ({\rm u}_\e - \e v)\,dx
\end{align*}
and
\[
\frac1{\e } \int_\om \rho^\e (v^\e - v) \cdot  ({\rm u}_\e - \e v)\,dx = \frac1{\e } \int_\om (\rho^\e - \rho) (v^\e - v) \cdot  ({\rm u}_\e - \e v)\,dx = \int_\om (\rho^\e - \rho) (v^\e - v) \cdot \nabla \log \rho\,dx,
\]
where we used
\[
\int_\om \rho (v^\e - v) \cdot  ({\rm u}_\e - \e v)\,dx = -\e \int_\om (v^\e - v) \cdot \nabla \rho\,dx = 0
\]
due to 
\[
\rho {\rm u}_\e = \e (\rho v - \nabla \rho).
\]
This yields
\begin{align*}
&\frac{d}{dt}\lt( \iint_{\om \times \R^2} f^\e \log \lt( \frac{f^\e}{ M_{\rho, {\rm u}_\e}} \rt)  dxd\xi + \frac1{2 }\int_\om |v^\e - v|^2\,dx \rt) \cr
&\quad + \frac1{\e^2  } \iint_{\om \times \R^2} \frac1{f^\e} |  \nabla_\xi f^\e + (\xi - u^\e)f^\e|^2\,dxd\xi + \frac{1}{\e^2  }\int_\om \rho^\e | ({\rm u}_\e - u^\e) - \e (v - v^\e)|^2\,dx +   \int_\om |\nabla (v^\e - v)|^2\,dx  \cr
&\quad =  - \frac1{\e } \iint_{\om \times \R^2} \lt(f^\e ({\rm u}_\e-\xi) \otimes ({\rm u}_\e-\xi)  -   f^\e \mathbb{I}_2\rt): \nabla {\rm u}_\e\,dxd\xi   +   \int_\om (v^\e - v)\otimes (v^\e - v) : \nabla v\,dx\cr
&\quad \quad   +  \int_\om (\rho^\e - \rho) (v^\e - v) \cdot \nabla \log \rho\,dx +   \int_\om \rho^\e ({\rm u}_\e - u^\e) \cdot {\rm e}_\e\,dx.
\end{align*}
Integrating the above with respect to time concludes the desired result.
\end{proof}

%
%
%
%
%
%

\section{Proof of Theorem \ref{thm_main}}\label{sec_proof}

We begin the proof by comparing the two relative entropies $\mathscr{H}[f^\e|M_{\rho,0}]$ and $\mathscr{H}[f^\e|M_{\rho,{\rm u}_\e}]$, showing that they differ only by $O(\e^2)$. This equivalence will allow us to work with the more convenient Maxwellian $M_{\rho,{\rm u}_\e}$ in the stability estimate.

\begin{lemma}\label{lem:equi} 
There exists a constant $C >0$, independent of $\e>0$, such that  
\[
\frac12 \mathscr{H}[f^\e | M_{\rho, 0}] - C\e^2 \leq \mathscr{H}[f^\e | M_{\rho, {\rm u}_\e}] \leq 2 \mathscr{H}[f^\e | M_{\rho, 0}] + C\e^2.
\]
\end{lemma}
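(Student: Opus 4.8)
The plan is to use the algebraic decomposition \eqref{eq_exp} with $u = {\rm u}_\e$ and $u=0$ respectively, and compare the two resulting expressions. Applying \eqref{eq_exp} termwise, after integrating in $x$, gives
\[
\mathscr{H}[f^\e | M_{\rho, {\rm u}_\e}] = \intrr f^\e \log \frac{f^\e}{M_{\rho^\e,u^\e}}\,dxd\xi + \int_\om \rho^\e \log\frac{\rho^\e}{\rho}\,dx + \frac12 \int_\om \rho^\e |u^\e - {\rm u}_\e|^2\,dx,
\]
and the same identity with ${\rm u}_\e$ replaced by $0$. Subtracting, the kinetic and density pieces cancel and we are left with
\[
\mathscr{H}[f^\e | M_{\rho, {\rm u}_\e}] - \mathscr{H}[f^\e | M_{\rho, 0}] = \frac12 \int_\om \rho^\e \lt( |u^\e - {\rm u}_\e|^2 - |u^\e|^2 \rt) dx = \frac12 \int_\om \rho^\e \lt( |{\rm u}_\e|^2 - 2 u^\e \cdot {\rm u}_\e \rt) dx.
\]
So everything reduces to estimating this last quantity, which is morally $O(\e)$ in the worst case because of the cross term $\rho^\e u^\e \cdot {\rm u}_\e$; absorbing it into a fraction of $\mathscr{H}[f^\e|M_{\rho,0}]$ is what produces the stated two-sided bound with constants $1/2$ and $2$ rather than $1 \pm o(1)$.

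Next I would bound the two pieces. For the first, since $\|{\rm u}_\e\|_{L^\infty} \leq C\e$ by \eqref{bdd_u} and the regularity assumptions on $(\rho,v)$ from Theorem \ref{thm_main} (recall $\rho\in L^\infty(0,T;\mathcal{P})$ so $\int_\om\rho^\e\,dx = 1$ is not available — $\rho^\e$ has unit mass by the normalization, so $\int_\om \rho^\e\,dx = 1$), we get $\frac12\int_\om \rho^\e |{\rm u}_\e|^2\,dx \leq C\e^2$. For the cross term I would use Young's inequality in the form $|\rho^\e u^\e \cdot {\rm u}_\e| \leq \delta \rho^\e |u^\e|^2 + \frac{1}{4\delta}\rho^\e |{\rm u}_\e|^2$ for a small $\delta$ to be chosen, giving
\[
\lt| \int_\om \rho^\e u^\e \cdot {\rm u}_\e\,dx \rt| \leq \delta \int_\om \rho^\e |u^\e|^2\,dx + \frac{C\e^2}{4\delta}.
\]
The remaining point is to control $\int_\om \rho^\e|u^\e|^2\,dx$ by $\mathscr{H}[f^\e|M_{\rho,0}]$ plus an $O(\e^2)$ error. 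Writing $\frac{\rho^\e}{2}|u^\e|^2 = \frac{\rho^\e}{2}|u^\e - 0|^2$, the decomposition \eqref{eq_exp} with $(\rho,u) = (\rho,0)$ shows $\frac12\int_\om \rho^\e|u^\e|^2\,dx = \mathscr{H}[f^\e|M_{\rho,0}] - \intrr f^\e\log\frac{f^\e}{M_{\rho^\e,u^\e}}\,dxd\xi - \int_\om \rho^\e\log\frac{\rho^\e}{\rho}\,dx$; the kinetic term $\mathscr{H}[f^\e|M_{\rho^\e,u^\e}]$ is nonnegative, and $\int_\om \rho^\e\log\frac{\rho^\e}{\rho}\,dx = \mathscr{H}[\rho^\e|\rho] \geq 0$ as well (using $\int\rho^\e = \int\rho = 1$), so in fact $\frac12\int_\om \rho^\e|u^\e|^2\,dx \leq \mathscr{H}[f^\e|M_{\rho,0}]$ directly. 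Putting these together with $\delta = 1/8$ yields
\[
\lt| \mathscr{H}[f^\e | M_{\rho, {\rm u}_\e}] - \mathscr{H}[f^\e | M_{\rho, 0}] \rt| \leq \tfrac12 \mathscr{H}[f^\e|M_{\rho,0}] + C\e^2,
\]
and the two claimed inequalities follow by rearranging.

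The main obstacle — such as it is — is the control of the cross term $\int_\om \rho^\e u^\e\cdot {\rm u}_\e\,dx$, since $u^\e$ is not controlled in any strong norm uniformly in $\e$; the resolution is exactly that $\rho^\e|u^\e|^2$ is itself dominated by the relative entropy $\mathscr{H}[f^\e|M_{\rho,0}]$ through \eqref{eq_exp}, so one never needs a standalone bound on $u^\e$, only on the weighted quantity $\rho^\e|u^\e|^2$. One should double-check that all applications of \eqref{eq_exp} are legitimate pointwise-in-$x$ (they are, for a.e.\ $x$, with both sides possibly infinite on a null set) and that the integrated identities hold given the $L^1_2 \cap L\log L$ control on $f^\e$ from Lemma \ref{lem:kin} and the boundedness of $\log\rho$ and $\nabla\log\rho$; these are routine given the hypotheses already in force.
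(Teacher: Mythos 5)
Your proof is correct and follows essentially the same route as the paper: compute the difference of the two relative entropies (which reduces to the Gaussian--shift quantity $\int_\om \rho^\e({\rm u}_\e\cdot u^\e - \tfrac12|{\rm u}_\e|^2)\,dx$), bound $\int_\om\rho^\e|{\rm u}_\e|^2\,dx\le C\e^2$ from \eqref{bdd_u}, and absorb the cross term via Young's inequality into a fraction of the relative entropy using the decomposition \eqref{eq_exp}. The only (immaterial) difference is the target of the absorption: you absorb $\int_\om\rho^\e|u^\e|^2\,dx$ into $\mathscr{H}[f^\e|M_{\rho,0}]$, whereas the paper absorbs $\int_\om\rho^\e|u^\e-{\rm u}_\e|^2\,dx$ into $\mathscr{H}[f^\e|M_{\rho,{\rm u}_\e}]$; both yield the two-sided bound with constants $1/2$ and $2$ after rearrangement.
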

\begin{proof} 
 Since
\begin{align*}
\intrr  f^\e \log \frac{M_{\rho, {\rm u}_\e}}{M_{\rho, 0}}\,dxd\xi  
 = \frac1{2} \int_\Omega \rho^\e (2 {\rm u}_\e \cdot u^\e - |{\rm u}_\e|^2)\,dx  
 = -\int_\Omega \rho^\e {\rm u}_\e \cdot (u^\e - {\rm u}_\e)\,dx + \frac1{2}\int_\Omega \rho^\e |{\rm u}_\e|^2\,dx,
\end{align*}
we obtain
\begin{align*}
\lt|\mathscr{H}[f^\e | M_{\rho, 0}] - \mathscr{H}[f^\e | M_{\rho, {\rm u}_\e}]\rt| 
   \leq 2\int_\Omega \rho^\e|{\rm u}_\e|^2\,dx + \frac1{4}\int_\Omega \rho^\e |u^\e - {\rm u}_\e|^2\,dx
  \leq C\e^2 + \frac12\mathscr{H}[f^\e | M_{\rho, {\rm u}_\e}]
\end{align*}
due to \eqref{bdd_u}. Hence the two entropies are equivalent up to $O(\e^2)$, namely
\[
\mathscr{H}[f^\e | M_{\rho, {\rm u}_\e}] \leq 2 \mathscr{H}[f^\e | M_{\rho, 0}] + C\e^2
\]
and by symmetry,
\[
\mathscr{H}[f^\e | M_{\rho, 0}] \leq 2 \mathscr{H}[f^\e | M_{\rho, {\rm u}_\e}] + C\e^2.
\]
This completes the proof.
\end{proof}

%
%
%
%
%
%
 \subsection{Closing the relative entropy inequality} We now turn to the main step: estimating the error terms produced by Proposition \ref{prop_key_light} and closing the relative entropy inequality by a Gr\"onwall argument.

From Proposition \ref{prop_key_light}, for a.e. $t \in [0,T]$, we have
\begin{align*}
&\mathscr{H}[f^\e | M_{\rho, {\rm u}_\e}](t) + \frac12\|(v^\e - v)(t)\|_{L^2}^2 + \frac1{\e^2  } \int_0^t \iint_{\om \times \R^2} \frac1{f^\e} |  \nabla_\xi f^\e + (\xi - u^\e)f^\e|^2\,dxd\xi ds   \cr
&\quad + \frac{1}{\e^2}\int_0^t \int_\om \rho^\e | ({\rm u}_\e - u^\e) - \e (v - v^\e)|^2\,dx ds + \int_0^t \int_\om |\nabla (v^\e - v)|^2\,dxds  \cr
&\quad \leq   \mathscr{H}[f^\e | M_{\rho, {\rm u}_\e}](0) + \frac12\|(v^\e - v)(0)\|_{L^2}^2  + I + II + III + IV,
\end{align*}
where
\begin{align*}
I &:= - \frac1{\e  }\int_0^t \iint_{\om \times \R^2} \lt(f^\e ({\rm u}_\e-\xi) \otimes ({\rm u}_\e-\xi)  -    f^\e \mathbb{I}_2\rt): \nabla {\rm u}_\e\,dxd\xi ds,\cr
II &:= \int_0^t \int_\om (v^\e - v)\otimes (v^\e - v) : \nabla v\,dxds,\cr
III &:= \int_0^t \int_\om (\rho^\e - \rho) (v^\e - v) \cdot \nabla \log \rho\,dxds,\cr
IV &:= \int_0^t \int_\om \rho^\e ({\rm u}_\e - u^\e) \cdot {\rm e}_\e\,dxds.
\end{align*}

We estimate each term separately. 

For $I$, we use the identity
\[
\intr f^\e ({\rm u}_\e-\xi) \otimes ({\rm u}_\e-\xi)\,d\xi =  \rho^\e ({\rm u}_\e - u^\e) \otimes ({\rm u}_\e - u^\e) - \intr f^\e (u^\e - \xi) \otimes \xi\,d\xi
\]
to decompose 
\begin{align*}
&- \iint_{\om \times \R^2}  \lt( f^\e ({\rm u}_\e-\xi) \otimes ({\rm u}_\e-\xi)  -   f^\e \mathbb{I}_2 \rt): \nabla {\rm u}_\e\,dxd\xi  \cr
&\quad = - \int_\om \rho^\e ({\rm u}_\e - u^\e) \otimes ({\rm u}_\e - u^\e) : \nabla {\rm u}_\e\,dx   + \int_\om \lt(  \int_\om \lt( f^\e (u^\e - \xi) -  \nabla_\xi f^\e \rt) \otimes \xi\,d\xi \rt) : \nabla  {\rm u}_\e\,dx.
\end{align*}
Note that the right-hand side of the above can be bounded as
\begin{align*}
\lt|\int_\om \rho^\e ({\rm u}_\e - u^\e) \otimes ({\rm u}_\e - u^\e) : \nabla {\rm u}_\e\,dx\rt| &\leq \|\nabla {\rm u}_\e\|_{L^\infty} \int_\om \rho^\e |u^\e - {\rm u}_\e|^2\,dx \cr
&= \e \|\nabla (v - \nabla \log \rho)\|_{L^\infty} \int_\om \rho^\e |u^\e - {\rm u}_\e|^2\,dx
\end{align*}
and
\begin{align*}
&\lt| \int_\om \lt(  \int_\om \lt( f^\e (u^\e - \xi) -  \nabla_\xi f^\e \rt) \otimes \xi\,d\xi \rt) : \nabla  {\rm u}_\e\,dx \rt| \cr
&\quad \leq \|\nabla {\rm u}_\e\|_{L^\infty}\lt(\iint_{\om \times \R^2} |\xi|^2 f^\e\,dxd\xi  \rt)^\frac12 \lt( \iint_{\om \times \R^2} \frac1{f^\e} |   \nabla_\xi f^\e - (u^\e - \xi)f^\e|^2\,dxd\xi \rt)^\frac12\cr
&\quad = \e \|\nabla (v - \nabla \log \rho)\|_{L^\infty} \|f^\e\|_{L^1_2}^\frac12  \lt( \iint_{\om \times \R^2} \frac1{f^\e} |   \nabla_\xi f^\e - (u^\e - \xi)f^\e|^2\,dxd\xi \rt)^\frac12.
\end{align*}
Thus, by Lemma \ref{lem:kin}, we deduce
\[
|I| \leq C  \int_0^t \int_\om \rho^\e |u^\e - {\rm u}_\e|^2\,dx ds +  C\e^2   + \frac1{4\e^2  } \int_0^t \iint_{\om \times \R^2} \frac1{f^\e} |  \nabla_\xi f^\e + (\xi - u^\e)f^\e|^2\,dxd\xi ds ,
\]
where $C>0$ depends on $\|\nabla (v - \nabla \log \rho)\|_{L^\infty}$.

For $II$, we readily find
\[
|II| \leq  \|\nabla v\|_{L^\infty} \int_0^t  \int_\om | v^\e - v|^2\,dxds,
\]
and by Lemmas \ref{lem:TM} and \ref{lem:mom} we bound $III$ as
\begin{align*}
|III| &\leq  \|\nabla \log \rho \|_{L^\infty}\lt( 2 \int_0^t \int_\om \rho^\e \log \lt(\frac{\rho^\e}\rho \rt) dx ds\rt)^{\frac12} \lt(\int_0^t \int_\om (\rho + \rho^\e)|(v^\e - v)|^2\,dx ds \rt)^{\frac12}\cr
&\leq  C\int_0^t \int_\om \rho^\e \log \lt(\frac{\rho^\e}\rho \rt) dxds + C\int_0^t\int_\om |v^\e - v|^2\,dxds  + \frac14\int_0^t\int_\om |\nabla (v^\e - v)|^2\,dxds,
\end{align*}
where $C > 0$ depends on $\|\nabla \log \rho \|_{L^\infty}$ and $\|\rho\|_{L^\infty}$. 

Finally, we estimate
\[
|IV| \leq \frac1{2\e}\int_0^t\int_\om \rho^\e |u^\e - {\rm u}_\e|^2\,dxds +  \e\frac{\| {\rm e}_\e\|_{L^\infty}^2}2\|\rho^\e\|_{L^1} \leq  \frac1{2\e}\int_0^t\int_\om \rho^\e |u^\e - {\rm u}_\e|^2\,dxds + C\e^2,
\]
where $C>0$ depends on $\|(v - \nabla \log \rho)\|_{W^{1,\infty}_{t,x}}$. Here, by Lemma \ref{lem:TM}, we further estimate
\begin{align*}
\frac1{2\e}\int_0^t\int_\om \rho^\e |u^\e - {\rm u}_\e|^2\,dxds &\leq \frac1\e\int_0^t \int_\om \rho^\e | ({\rm u}_\e - u^\e) - \e (v - v^\e)|^2\,dxds + \e \int_0^t\int_\om \rho^\e |v- v^\e|^2\,dxds\cr
&\leq \frac1\e \int_0^t\int_\om \rho^\e | ({\rm u}_\e - u^\e) - \e (v - v^\e)|^2\,dxds +  C\int_0^t\int_\om |v^\e - v|^2\,dxds \cr
&\quad + \frac14\int_0^t\int_\om |\nabla (v^\e - v)|^2\,dxds
\end{align*}
for sufficiently small $\e>0$. 

Collecting all bounds gives
\begin{align*}
&\mathscr{H}[f^\e | M_{\rho, {\rm u}_\e}](t) + \|(v^\e - v)(t)\|_{L^2}^2 + \frac1{2\e^2  } \int_0^t \iint_{\om \times \R^2} \frac1{f^\e} |  \nabla_\xi f^\e + (\xi - u^\e)f^\e|^2\,dxd\xi ds   \cr
&\quad + \frac{1}{2\e^2}\int_0^t \int_\om \rho^\e | ({\rm u}_\e - u^\e) - \e (v - v^\e)|^2\,dx ds + \frac12 \int_0^t \int_\om |\nabla (v^\e - v)|^2\,dxds  \cr
&\quad  \leq C\e^2 +  C\mathscr{H}[f^\e | M_{\rho, {\rm u}_\e}](0) + C\|(v^\e - v)(0)\|_{L^2}^2 + C \int_0^t \lt(\mathscr{H}[f^\e | M_{\rho, {\rm u}_\e}](s) + \|(v^\e - v)(s)\|_{L^2}^2\rt) ds.
\end{align*}
Applying Gr\"onwall's lemma to the above yields
\begin{align}\label{rel}
\begin{aligned}
&\mathscr{H}[f^\e | M_{\rho, {\rm u}_\e}](t)  + \|(v^\e - v)(t)\|_{L^2}^2 + \frac1{2\e^2  } \int_0^t \iint_{\om \times \R^2} \frac1{f^\e} |  \nabla_\xi f^\e + (\xi - u^\e)f^\e|^2\,dxd\xi ds   \cr
&\quad + \frac{1}{2\e^2}\int_0^t \int_\om \rho^\e | ({\rm u}_\e - u^\e) - \e (v - v^\e)|^2\,dx ds + \frac12 \int_0^t \int_\om |\nabla (v^\e - v)|^2\,dxds  \cr
&\quad  \leq C\e^2 +  C\mathscr{H}[f^\e | M_{\rho, {\rm u}_\e}](0)  + C\|v^\e_0 - v_0\|_{L^2}^2 .
\end{aligned}
\end{align}

%
%
%
%
%
%

\subsection{Stability via relative entropy} Under the well-preparedness assumption on the initial data, the 
relative entropy inequality \eqref{rel} yields 
\[
\sup_{0 \leq t \leq T} \lt( \mathscr{H}[f^\e | M_{\rho, {\rm u}_\e}](t)  + \|(v^\e - v)(t)\|_{L^2}^2\rt) + \|\nabla (v^\e - v)\|_{L^2([0,T] \times \Omega)}^2 \to 0  \quad \text{as } \e \to 0.
\]
Thus, as mentioned in Remark \ref{rmk_expan}, by Csisz\'ar--Kullback--Pinsker inequality, we obtain the convergences
\begin{align*}
f^\e &\to M_{\rho, 0} \quad \mbox{in } L^\infty(0,T; L^1(\Omega \times \R^2)), \cr
 \intr f^\e\,d\xi &\to \rho \quad \mbox{in }  L^\infty(0,T; L^1(\Omega)), \cr
  v^\e &\to v \quad \mbox{in } L^\infty(0,T; L^2(\Omega)) \cap L^2(0,T;H^1(\om)).
\end{align*}
For the convergence of scaled momentum density
\[
\frac1\e \intr \xi f^\e\,d\xi = \frac1\e \rho^\e u^\e,
\]
we begin with
\begin{align*}
 \int_\Omega | \rho^\e u^\e - \rho {\rm u}_\e | \,dx &\leq   \int_\Omega \rho^\e | u^\e - {\rm u}_\e|\, dx +   \int_\Omega |\rho^\e - \rho| |{\rm u}_\e|\, dx \cr
 &\leq \lt(  \int_\Omega \rho^\e | u^\e - {\rm u}_\e|^2\, dx\rt)^\frac12 + \e\| v- \nabla \log \rho \|_{L^\infty} \|\rho^\e - \rho\|_{L^1}.
 \end{align*}
 Thus, the key quantity to control is the modulated kinetic energy, 
\[
\frac1{2\e^2}\int_0^t\int_\om \rho^\e |u^\e - {\rm u}_\e|^2\,dxds.
\] 
Using the dissipation terms in the relative entropy inequality \eqref{rel} and Lemmas \ref{lem:TM} and \ref{lem:mom}, we deduce
\begin{align}\label{kin_ref}
\begin{aligned}
\frac1{2\e^2}\int_0^t\int_\om \rho^\e |u^\e - {\rm u}_\e|^2\,dxds &\leq \frac1{\e^2}\int_0^t \int_\om \rho^\e | ({\rm u}_\e - u^\e) - \e (v - v^\e)|^2\,dxds + \int_0^t\int_\om \rho^\e |v- v^\e|^2\,dxds\cr
&\leq \frac1{\e^2} \int_0^t\int_\om \rho^\e | ({\rm u}_\e - u^\e) - \e (v - v^\e)|^2\,dxds +  C\int_0^t\int_\om |v^\e - v|^2\,dxds \cr
&\quad + C\int_0^t\int_\om |\nabla (v^\e - v)|^2\,dxds\cr
&\leq C\e^2 +  C\mathscr{H}[f^\e | M_{\rho, 0}](0)  + C\|v^\e_0 - v_0\|_{L^2}^2. 
\end{aligned}
\end{align}
Inserting \eqref{kin_ref} into the previous bound for the momentum density gives
 \[
\frac1\e\int_0^T\int_\Omega | \rho^\e u^\e - \rho {\rm u}_\e | \,dxdt \leq C \lt(  \frac1{\e^2}\int_0^T\int_\Omega \rho^\e | u^\e - {\rm u}_\e|^2\, dxdt\rt)^\frac12 + C\sup_{0 \leq t \leq T} \|(\rho^\e - \rho)(t)\|_{L^1},
 \]
and thus,
\[
\frac1\e\int_0^T\int_\Omega | \rho^\e u^\e - \rho {\rm u}_\e | \,dxdt \leq C\e + C\lt( \mathscr{H}[f^\e | M_{\rho, 0}](0) +\|v^\e_0 - v_0\|_{L^2}^2\rt)^\frac12.
\]

Hence, we conclude the desired stability and strong convergence estimates.

%
%
%
%
%
%

\subsection{Stability in negative Sobolev spaces}\label{sec_weak-conv}

In this subsection, we derive convergence estimates for $v^\e-v$ and $\rho^\e-\rho$ in negative Sobolev spaces. Although the two estimates rely on the same heat-kernel representation formula, the nonlinear source terms are controlled by different arguments. For this reason, we first record a common parabolic representation formula and then apply it separately to the velocity and density equations.

For simplicity of presentation, we provide the argument only in the whole-space case $\Omega=\R^2$. The periodic case $\Omega=\T^2$ can be treated in the same way, replacing the heat kernel on $\R^2$ by the corresponding periodic heat semigroup; the resulting estimates are entirely analogous.
 
We denote by
\[
\mh_t(x):=\frac{1}{4\pi t}e^{-\frac{|x|^2}{4t}}, \quad t>0,\ x\in\R^2,
\]
the heat kernel on $\R^2$.  

\begin{lemma}\label{lem:para-rep}
Let $s>1$ and $T>0$. Assume that
\[
w_0\in H^{-s-1}(\R^2), \quad F,H\in L^1(0,T;H^{-s}(\R^2;\R^2)), \quad G\in L^1(0,T;H^{-s}(\R^2;\R^{2\times 2})),
\]
and let
\[
w\in L^\infty(0,T;H^{-s-1}(\R^2))
\]
be a distributional solution to
\bq\label{eq:para-gene}
\pa_t w-\Delta w =\nabla\cdot F+\nabla\otimes\nabla:G+\pa_t H \quad\text{in }(0,T)\times\R^2,
\eq
with initial data
\[
w|_{t=0}=w_0
\]
in $H^{-s-1}(\R^2)$. Then, for every $\phi\in H^{s+1}(\R^2)$ and almost every $t\in[0,T]$, we have
\bq\label{eq:para-rep}
\begin{aligned}
\lal w(t),\phi\ral
&=\lal w_0,\mh_t\star_x \phi\ral
- \int_0^t \lal F(\tau),\nabla(\mh_{t-\tau}\star_x \phi)\ral\,d\tau   +\int_0^t \lal G(\tau),\nabla\otimes\nabla(\mh_{t-\tau}\star_x \phi)\ral\,d\tau \\
&\quad +\lal H(t),\phi\ral
-\lal H(0),\mh_t\star_x \phi\ral  + \int_0^t \lal H(\tau),\Delta(\mh_{t-\tau}\star_x \phi)\ral\,d\tau,
\end{aligned}
\eq
where the brackets denote the natural duality pairings and $\star_x$ denotes the convolution with respect to $x$.
\end{lemma}

\begin{proof}
Let $t\in(0,T]$ and let $\phi\in C_c^\infty(\R^2)$. Define
\[
\Phi(\tau,x):=(\mh_{t-\tau}\star_x\phi)(x),  \quad 0\le \tau<t.
\]
Then $\Phi$ satisfies
\[
-\pa_\tau \Phi-\Delta\Phi=0 \quad\text{in }(0,t)\times\R^2, \quad \Phi(t,\cdot)=\phi.
\]
Testing \eqref{eq:para-gene} against $\Phi$ on $(0,t)\times\R^2$, we obtain
\[
\int_0^t \lal \pa_\tau w-\Delta w,\Phi\ral\,d\tau = \int_0^t \lal \nabla\cdot F+\nabla\otimes\nabla:G+\pa_\tau H,\Phi\ral\,d\tau .
\]
Using the backward heat equation for $\Phi$, integration by parts in time, and spatial integration by parts, we get
\begin{align*}
\lal w(t),\phi\ral &=\lal w_0,\mh_t\star_x\phi\ral +\int_0^t \lal F(\tau),\nabla(\mh_{t-\tau}\star_x\phi)\ral\,d\tau   +\int_0^t \lal G(\tau),\nabla\otimes\nabla(\mh_{t-\tau}\star_x\phi)\ral\,d\tau \\
&\quad +\lal H(t),\phi\ral -\lal H(0),\mh_t\star_x\phi\ral  +\int_0^t \lal H(\tau),\Delta(\mh_{t-\tau}\star_x\phi)\ral\,d\tau .
\end{align*}
This proves \eqref{eq:para-rep} for $\phi\in C_c^\infty(\R^2)$. The general case $\phi\in H^{s+1}(\R^2)$ follows by density.
\end{proof}

%
%
%
%
%
%

\subsubsection{Negative Sobolev estimate for $v^\e-v$}

We first consider the velocity difference
\[
w_v:=v^\e-v.
\]
Recall that, since $(f^\e,v^\e)$ is a weak entropy solution to \eqref{main1}, the fluid equation is satisfied in the sense of distributions. In particular, using the kinetic momentum balance, we may rewrite the fluid equation equivalently as
\[
\pa_t v^\e+v^\e\cdot\nabla v^\e+\nabla P^\e-\Delta v^\e = -\e\pa_t(\rho^\e u^\e)-\nabla\cdot\lt(\intr \xi\otimes\xi f^\e\,d\xi\rt)
\]
in $\md'((0,T)\times\R^2)$, while $v$ satisfies
\[
\pa_t v+v\cdot\nabla v+\nabla P-\Delta v=0, \quad \nabla\cdot v=0.
\]
Applying the Leray projection operator $\mathbb{P} := {\rm id} +\nabla (-\Delta)^{-1}\nabla \cdot$, we remove the pressure terms and obtain
\[
\pa_t w_v-\Delta w_v = -\mathbb P\nabla\cdot (v^\e\otimes v^\e-v\otimes v) -\e\pa_t(\rho^\e u^\e) -\mathbb P\nabla\cdot\lt(\intr (\xi\otimes\xi-\mathbb{I}_2)f^\e\,d\xi\rt).
\]

Recall that for $s\ge0$, the dual of $H^s(\R^2)$ is $H^{-s}(\R^2)$, and that
\[
L^1(\R^2)\hookrightarrow H^{-s}(\R^2) \quad\text{for }s>1.
\]
This weak topology is convenient for the parabolic representation formula derived above. In particular, since $v^\e,v\in L^2(\R^2)$, the quantity $v^\e-v$ is naturally viewed as an element of $H^{-s_1}(\R^2)$ for any $s_1>1$.
Applying Lemma \ref{lem:para-rep} componentwise, and using that $\mh_{t-s}\star_x$ commutes with the Leray projector $\mathbb P$, we obtain, for any $\psi\in H^{s_1}(\R^2)$ and $0<t\le T$,
\begin{align*}
\int_{\R^2} (v^\e-v)\cdot\psi\,dx
&=\int_{\R^2}\mh_t\star_x(v_0^\e-v_0)\cdot\psi\,dx  + \int_0^t \intr (v^\e\otimes v^\e-v\otimes v)(s):
\mathbb P\nabla(\mh_{t-s}\star_x\psi)\,dxds\\
&\quad -\e\int_0^t \intr \rho^\e u^\e(s)\cdot
\Delta(\mh_{t-s}\star_x\mathbb P\psi)\,dxds -\e\intr \rho^\e u^\e(t)\cdot \mathbb P\psi\,dx\\
&\quad 
+\e\intr \mh_t\star_x(\rho_0^\e u_0^\e)\cdot\mathbb P\psi\,dx +\int_0^t \intr
\lt(\intr (\xi\otimes\xi-\mathbb{I}_2)f^\e\,d\xi\rt)(s):
\nabla(\mh_{t-s}\star_x\mathbb P\psi)\,dxds\\
&=: \sum_{i=1}^6 I_i.
\end{align*}
We now estimate the six terms on the right-hand side.

For the initial contribution $I_1$, we simply use duality to obtain
\[
I_1 \le \|v_0^\e - v_0\|_{H^{-s_1}} \|\mh_t \star_x \psi\|_{H^{s_1}} \le \|\psi\|_{H^{s_1}} \|v_0^\e -v_0\|_{H^{-s_1}}.
\]
To estimate the nonlinear convection term $I_2$, we use Plancherel's theorem together with the standard heat-kernel bounds
\[
\|\mh_t \star f\|_{L^q} \le C t^{- \lt( \frac1p -\frac1q\rt)} \|f\|_{L^p}, \quad \|\nabla\mh_t\star f\|_{L^q} \le C t^{- \lt( \frac1p -\frac1q\rt) -\frac12} \|f\|_{L^p}, \quad 1 \le p \le q \le \infty,
\]
to have
\begin{align*}
I_2 &= \int_0^t \intr \lt[(v^\e - v)\otimes (v^\e -v)(s) + v\otimes (v^\e - v) + (v^\e - v)\otimes v \rt] : \mh_{t-s}\star_x \mathbb{P}\nabla\psi\,dxds\\
&\le \int_0^t \|v^\e -v\|_{L^4}^2 \|\mh_{t-s}\star_x \mathbb{P}\nabla\psi\|_{L^2}\,ds + \int_0^t \|v^\e -v\|_{H^{-s_1}} \|v (\mh_{t-s}\star_x\mathbb{P}\nabla\psi)\|_{H^{s_1}}\,ds\\
&\le C\|\nabla\psi\|_{L^2} \int_0^t \|v^\e -v\|_{L^2} \|\nabla(v^\e -v)\|_{L^2}\,ds + C\|v\|_{L^\infty(0,T;H^{s_1})} \int_0^t \|v^\e - v\|_{H^{-s_1}}\|\mh_{t-s}\star\mathbb{P}\nabla\psi\|_{H^{s_1}}\,ds\\
&\le C\|\psi\|_{H^{s_1}} \lt(\mathscr{H}[f^\e | M_{\rho, {\rm u}_\e}](0)  + \|v^\e_0 - v_0\|_{L^2}^2 + \e^2\rt)\\
&\quad + C\|v\|_{L^\infty(0,T;H^{s_1})} \|\psi\|_{H^{s_1}}\int_0^t (t-s)^{-\frac12}\|v^\e - v\|_{H^{-s_1}}\,ds,
\end{align*}
where we used
\[
x^k e^{-ax} \le \lt(\frac{k}{a} \rt)^k e^{-k}\quad \mbox{on }  x >0
\]
and
\[
\|\mh_{t-s}\star\mathbb{P}\nabla\psi\|_{H^{s_1}}^2  \le \intr (1+|\eta|^2)^{s_1} |\eta|^2 e^{-2(t-s)|\eta|^2} |\hat\psi|^2\,d\eta \le C(t-s)^{-1} \|\psi\|_{H^{s_1}}^2.
\]

For the time-derivative contribution $I_3$, we use the estimate 
\[
\intr \rho^\e|u^\e -{\rm u}_\e|^2\,dx \le 2\mathscr{H}[f^\e |M_{\rho, {\rm u}_\e}] 
\]
and get
\begin{align*}
I_3 &= -\e \int_0^t \intr \rho^\e u^\e \mh_{t-s}\star_x \Delta \mathbb{P} \psi\,dxds\\
&\le C\e \|\psi\|_{H^{s_1}}\int_0^t (t-s)^{-\frac{3-s_1}{2}} \lt(\|\rho^\e (u^\e - {\rm u}_\e)\|_{L^1} + \|\rho^\e {\rm u}_\e\|_{L^1}\rt)\,ds\\
&\le C\e \|\psi\|_{H^{s_1}} \lt(\int_0^t (t-s)^{-\frac{3-s_1}{2}}\,ds\rt)^{\frac12} \lt(\int_0^t \intr (t-s)^{-\frac{3-s_1}{2}}  \rho^\e |u^\e - {\rm u}_\e|^2\,dxds\rt)^{\frac12} +C\e^2 \|\psi\|_{H^{s_1}}\\
&\le C\e \|\psi\|_{H^{s_1}} \lt( \mathscr{H}[f^\e | M_{\rho, {\rm u}_\e}](0)  + \|v^\e_0 - v_0\|_{L^2}^2 + \e^2\rt)^{\frac12},
\end{align*}
where we used
\begin{align*}
\|\mh_{t-s}\star_x \Delta \mathbb{P} \psi\|_{L^\infty} &\le \intr |\eta|^2 e^{-(t-s)|\eta|^2}|\hat\psi|\,d\eta\\
&\le C\lt(\intr |\eta|^{4-2s_1} e^{-(t-s)|\eta|^2}\,d\eta \rt)^{\frac12}\|\psi\|_{H^{s_1}}\\
&\le C(t-s)^{-\frac{3-s_1}{2}} \|\psi\|_{H^{s_1}}.
\end{align*}

The endpoint term $I_4$ is bounded similarly:
\begin{align*}
I_4 &\le C\e\|\rho^\e u^\e\|_{L^1} \|\mathbb{P}\psi\|_{L^\infty}\\
&\le C\e \lt( \|\rho^\e (u^\e -{\rm u}_\e)\|_{L^1} + \e\rt) \|\mathbb{P}\psi\|_{H^{s_1}}\\
&\le C\e \|\psi\|_{H^{s_1}} \lt( \mathscr{H}[f^\e | M_{\rho, {\rm u}_\e}](0)  + \|v^\e_0 - v_0\|_{L^2}^2 + \e^2\rt)^{\frac12}.
\end{align*}

For the initial momentum term $I_5$, we directly deduce
\[
I_5  \le \e \|\rho_0^\e u_0^\e\|_{L^1} \|\mh_t \star_x \mathbb{P}\psi\|_{L^\infty} \le C\e \|\psi\|_{H^{s_1}} \lt(  \mathscr{H}[f^\e | M_{\rho, {\rm u}_\e}](0) + \e^2\rt)^{\frac12}.
\]

Finally, to estimate the stress contribution $I_6$, we first observe that
\begin{align*}
\intr (\xi\otimes \xi -\mathbb{I}_2)f^\e\,d\xi &= \intr (\xi \otimes \xi - u^\e \otimes u^\e - \mathbb{I}_2)f^\e\,d\xi + \rho^\e u^\e \otimes u^\e\\
&= \intr \lt[((\xi - u^\e)f^\e +\nabla_\xi f^\e)\otimes \xi + u^\e \otimes ((\xi -u^\e)f^\e +\nabla_\xi f^\e ) \rt]\,d\xi + \rho^\e u^\e \otimes u^\e.
\end{align*}
This implies
\begin{align*}
I_6 &\le \int_0^t \lt(\iint_{\R^2\times\R^2} \frac{1}{f^\e}|(\xi-u^\e) f^\e + \nabla_\xi f^\e|^2(s)\, dxd\xi\rt)^{\frac12}\lt(\iint_{\R^2\times\R^2} |\xi|^2 f^\e(s) \,dxd\xi \rt)^{\frac12} \|\mh_{t-s} \star_x \nabla  \mathbb{P}\psi\|_{L_x^\infty}\,ds\\
&\quad + \int_0^t \lt(\iint_{\R^2\times\R^2} \frac{1}{f^\e}|(\xi-u^\e) f^\e + \nabla_\xi f^\e|^2(s)\, dxd\xi\rt)^{\frac12}\lt(\intr \rho^\e  |u^\e|^2 (s) \,dx \rt)^{\frac12} \|\mh_{t-s} \star_x \nabla \mathbb{P}\psi\|_{L_x^\infty}\,ds\\
&\quad + \int_0^t \intr \lt(\rho^\e |u^\e - {\rm u}_\e|^2 + \rho^\e |{\rm u}_\e|^2\rt) \|\mh_{t-s} \star_x \nabla \mathbb{P} \psi\|_{L_x^\infty}\,dxds\\
&\le C\|\psi\|_{H^{s_1}} \lt(\int_0^t (t-s)^{-\frac12}\lt(\iint_{\R^2\times\R^2}\frac{1}{f^\e}|(\xi-u^\e) f^\e + \nabla_\xi f^\e|^2(s)\,dxd\xi\rt)\,ds\rt)^{\frac12}\\
&\quad + C\|\psi\|_{H^{s_1}} \int_0^t (t-s)^{-\frac12}\lt(\mathscr{H}[f^\e |M_{\rho, {\rm u}_\e}](s) + \e^2 \rt)\,ds\\
&\le C\|\psi\|_{H^{s_1}} \lt(\int_0^t (t-s)^{-\frac12}\lt(\iint_{\R^2\times\R^2} \frac{1}{f^\e}|(\xi-u^\e) f^\e + \nabla_\xi f^\e|^2(s)\,dxd\xi\rt) ds\rt)^{\frac12}\\
&\quad + C\|\psi\|_{H^{s_1}} \lt(\mathscr{H}[f^\e | M_{\rho, {\rm u}_\e}](0)  + \|v^\e_0 - v_0\|_{L^2}^2 + \e^2\rt).
\end{align*}

We collect all the estimates for $I_i$'s to yield
\begin{align*}
 \|(v^\e - v)(t)\|_{H^{-s_1}} &\le C\|v_0^\e - v_0\|_{H^{-s_1}}  + C \lt(\mathscr{H}[f^\e | M_{\rho, {\rm u}_\e}](0)  + \|v^\e_0 - v_0\|_{L^2}^2 + \e^2\rt)\\
&\quad + C\int_0^t (t-s)^{-\frac12}\|(v^\e -v)(s)\|_{H^{-s_1}}\,ds\\
&\quad + C \lt(\int_0^t (t-s)^{-\frac12}\lt(\iint_{\R^2\times\R^2} \frac{1}{f^\e}|(\xi-u^\e) f^\e + \nabla_\xi f^\e|^2(s)\,dxd\xi\rt) ds\rt)^{\frac12}.
\end{align*}

To close the above estimate, we recall the following lemma.
\begin{lemma}\cite[Lemma 4.2]{CJpre}\label{lem_aux_diff}
Let $T>0$, $\alpha>0$, and let $f,g \in L^2(0,T)$ be nonnegative functions such that
\[
f(t) \le g(t) + C\int_0^t \bigl((t-s)^{-\frac12} + (t-s)^{-1+\alpha}\bigr) f(s)\,ds
\]
for a.e. $t\in (0,T)$. Then there exists a constant $C_T>0$, depending only on $C$, $\alpha$, and $T$, such that
\[
\|f\|_{L^2(0,T)} \le C_T \|g\|_{L^2(0,T)}.
\]
\end{lemma}
Now, we write
\[\begin{aligned}
g(t) &:= C\|v_0^\e - v_0\|_{H^{-s_1}}  + C \lt(\mathscr{H}[f^\e | M_{\rho, {\rm u}_\e}](0)  + \|v^\e_0 - v_0\|_{L^2}^2 + \e^2\rt)\\
&\quad +  C \lt(\int_0^t (t-s)^{-\frac12}\lt(\iint_{\R^2\times\R^2} \frac{1}{f^\e}|(\xi-u^\e) f^\e + \nabla_\xi f^\e|^2(s)\,dxd\xi\rt) ds\rt)^{\frac12}.
\end{aligned}\]
Then one can check $g \in L^2(0,T)$. Indeed, by Fubini's theorem and \eqref{rel}, we obtain
\begin{align*}
\int_0^T |g(t)|^2\,dt
&\le C\lt(\|v_0^\e - v_0\|_{H^{-s_1}}
 + \mathscr{H}[f^\e | M_{\rho, {\rm u}_\e}](0) + \|v_0^\e - v_0\|_{L^2}^2
 + \e^2\rt)^2\\
&\quad + C\int_0^T \int_0^t (t-s)^{-\frac12}
\lt(\iint_{\R^2\times\R^2}\frac{1}{f^\e}|(\xi-u^\e) f^\e + \nabla_\xi f^\e|^2(s)\,dxd\xi\rt) dsdt\\
&=  C\lt(\|v_0^\e - v_0\|_{H^{-s_1}}
 + \mathscr{H}[f^\e | M_{\rho, {\rm u}_\e}](0) + \|v_0^\e - v_0\|_{L^2}^2
 + \e^2\rt)^2\\
&\quad + C\int_0^T
\lt(\iint_{\R^2\times\R^2}\frac{1}{f^\e}|(\xi-u^\e) f^\e + \nabla_\xi f^\e|^2(s)\,dxd\xi\rt)
\lt(\int_s^T (t-s)^{-\frac12}\,dt\rt)ds\\
&\le C\lt(\|v_0^\e - v_0\|_{H^{-s_1}}
 + \mathscr{H}[f^\e | M_{\rho, {\rm u}_\e}](0) + \|v_0^\e - v_0\|_{L^2}^2
 + \e^2\rt)^2\\
&\quad + C\int_0^T\iint_{\R^2\times\R^2}\frac{1}{f^\e}|(\xi-u^\e) f^\e + \nabla_\xi f^\e|^2\,dxd\xi dt\\
&\le C\lt(\|v_0^\e - v_0\|_{H^{-s_1}}
 + \mathscr{H}[f^\e | M_{\rho, {\rm u}_\e}](0) + \|v_0^\e - v_0\|_{L^2}^2
 + \e^2\rt)^2,
\end{align*}
Thus, we deduce from Lemma \ref{lem_aux_diff} that
\bq\label{est_v_hs1}
\|v^\e-v\|_{L^2(0,T;H^{-s_1})} \le C\lt( \|v_0^\e - v_0\|_{H^{-s_1}} + \mathscr{H}[f^\e | M_{\rho, {\rm u}_\e}](0)  + \|v^\e_0 - v_0\|_{L^2}^2 + \e^2\rt).  
\eq
This together with Lemma \ref{lem:equi} concludes the desired negative Sobolev estimate for the velocity error.

%
%
%
%
%
%

\subsubsection{Negative Sobolev estimate for $\rho^\e - \rho$}
We next turn to the density difference
\[
w_\rho:=\rho^\e-\rho.
\]
Since $(f^\e,v^\e)$ is a weak entropy solution to \eqref{main1}, the corresponding density equation is satisfied in the sense of distributions. In particular, using the local mass and momentum balances, we may rewrite the equation for $\rho^\e$ as
\[
\pa_t \rho^\e-\Delta \rho^\e = -\nabla\cdot(\rho^\e v^\e) +\nabla\cdot\lt(\e \pa_t(\rho^\e u^\e)\rt) +\nabla\otimes\nabla:\lt(\intr (\xi\otimes\xi-\mathbb{I}_2)f^\e\,d\xi\rt)
\]
in $\md'((0,T)\times\R^2)$, while $\rho$ solves
\[
\pa_t \rho-\Delta \rho = -\nabla\cdot(\rho v).
\]
Therefore, the difference $w_\rho=\rho^\e-\rho$ satisfies
\[
\pa_t w_\rho-\Delta w_\rho = -\nabla\cdot(\rho^\e v^\e-\rho v) +\nabla\cdot\lt(\e \pa_t(\rho^\e u^\e)\rt) +\nabla\otimes\nabla:\lt(\intr (\xi\otimes\xi-\mathbb{I}_2)f^\e\,d\xi\rt).
\]

Before applying the parabolic representation formula, we first note that
\[
\rho^\e \in L^\infty(0,T;H^{-1}(\R^2)).
\]
Indeed, we use Lemmas \ref{lem:TM} and \ref{lem:mom} to attain, for any $\varphi \in H^1(\R^2)$,
\[
\lt|\intr \rho^\e \varphi\,dx\rt| \le \lt(\intr \rho^\e \,dx\rt)^{\frac12} \lt(\intr \rho^\e |\varphi|^2\,dx\rt)^{\frac12} \le C\lt(1+ \sup_{0 <t<T} \intr \rho^\e |\log \rho^\e|\,dx \rt)^{\frac12} \|\varphi\|_{H^1}.
\]
Thus, $L(\varphi) := \intr \rho^\e \varphi\,dx$ is a bounded linear functional on $H^1(\R^2)$ and this implies $\rho^\e \in L^\infty(0,T;H^{-1}(\R^2))$. This also implies $\rho^\e \in L^\infty(0,T;H^{-s}(\R^2))$ for any $s>1$.

We apply Lemma \ref{lem:para-rep} to each component of the vector field $\e\pa_t(\rho^\e u^\e)$ and to the tensor field
\[
\intr (\xi\otimes\xi-\mathbb{I}_2)f^\e\,d\xi.
\]
Then, for any $\varphi\in H^{s_1+1}(\R^2)$ and $0<t\le T$, we obtain
\begin{align*}
\int_{\R^2} (\rho^\e-\rho)\varphi\,dx
&=\int_{\R^2}\mh_t\star_x(\rho_0^\e-\rho_0)\,\varphi\,dx +\int_0^t \intr (\rho^\e v^\e-\rho v)(s)\cdot
\nabla(\mh_{t-s}\star_x\varphi)\,dxds\\
&\quad -\e\int_0^t \intr \rho^\e u^\e(s)\cdot \nabla\Delta(\mh_{t-s}\star_x\varphi)\,dxds -\e\intr \rho^\e u^\e(t)\cdot\nabla\varphi\,dx \\
&\quad 
+\e\intr \mh_t\star_x(\rho_0^\e u_0^\e)\cdot\nabla\varphi\,dx  +\int_0^t \intr
\lt(\intr (\xi\otimes\xi-\mathbb{I}_2)f^\e(s)\,d\xi\rt):
\nabla\otimes\nabla(\mh_{t-s}\star_x\varphi)\,dxds\\
&=: \sum_{i=1}^6 J_i.
\end{align*}
We again estimate the terms on the right-hand side separately.

For the initial contribution $J_1$, duality immediately gives
\[
J_1\le \|\rho_0^\e - \rho_0\|_{H^{-s_1-1}}\|\mh_t \star_x \varphi\|_{H^{s_1+1}}\le \|\varphi\|_{H^{s_1+1}}\|\rho_0^\e - \rho_0\|_{H^{-s_1-1}}.
\]

To estimate the transport term $J_2$, we first observe that 
\[
\max\{a,b\}\min\{a^{-1}, b^{-1}\} = 1 
\]
holds for any $a, b>0$. Together with Lemma \ref{lem:equi}, this yields
\begin{align*}
\intr \lt| (\rho^\e - \rho)(v^\e -v)\rt|\,dx
&\le \lt(\intr \min\{(\rho^\e)^{-1}, \rho^{-1}\} |\rho^\e -\rho|^2\,dx\rt)^{\frac12}\lt(\intr \max\{\rho^\e, \rho\} |v^\e - v|^2\,dx\rt)^{\frac12}\\
&\le C\lt( \intr \rho^\e \log\lt(\frac{\rho^\e}{\rho} \rt)\,dx\rt)^{\frac12} \lt(\intr (\rho^\e + \rho)|v^\e -v|^2\,dx \rt)^{\frac12}\\
&\le C\mathscr{H}[f^\e |M_{\rho, {\rm u}_\e}] + C\|v^\e -v\|_{H^1}^2.
\end{align*}
We then decompose the transport difference as
\[
\rho^\e v^\e-\rho v
=(\rho^\e-\rho)(v^\e-v)+(\rho^\e-\rho)v+\rho(v^\e-v),
\]
and estimate the three terms separately according to the relative entropy bound, the already established estimate for $v^\e-v$, and the regularity assumptions on $\rho$ and $v$. Indeed, we obtain
\begin{align*}
J_2 &\le \int_0^t \intr |(\rho^\e -\rho)(v^\e -v)(s)| \|\mh_{t-s}\star_x \nabla\varphi\|_{L_x^\infty}\,dxds\\
&\quad + \int_0^t \|(\rho^\e -\rho)(s)\|_{H^{-s_1-1}} \|v(\mh_{t-s}\star_x \nabla\varphi)\|_{H^{s_1+1}}\,ds\\
&\quad + \int_0^t \|(v^\e - v)(s)\|_{H^{-s_1}}\|\rho (\mh_{t-s}\star_x \nabla\varphi)\|_{H^{s_1}}\,ds\\
&\le C \int_0^t \intr |(\rho^\e -\rho)(v^\e -v)(s)| \|\mh_{t-s}\star_x \varphi\|_{H^{s_1+1}}\,dxds\\
&\quad + C\|v\|_{L^\infty(0,T;H^{s_1+1})}\int_0^t \|(\rho^\e -\rho)(s)\|_{H^{-s_1-1}}\|\mh_{t-s}\star_x \nabla\varphi\|_{H^{s_1+1}}\,ds\\
&\quad + C\|\rho\|_{L^\infty(0,T;H^{s_1})}\int_0^t\|(v^\e -v)(s)\|_{H^{-s_1}} \|\mh_{t-s}\star_x\nabla\varphi\|_{H^{s_1}}\,ds\\
&\le C\|\varphi\|_{H^{s_1+1}} \int_0^t \lt(\mathscr{H}[f^\e |M_{\rho, {\rm u}_\e}](s) + \|(v^\e - v)(s)\|_{L^2}^2 + \|\nabla (v^\e -v)(s)\|_{L^2}^2\rt)\,ds\\
&\quad + C\|v\|_{L^\infty(0,T;H^{s_1+1})}\int_0^t (t-s)^{-\frac12} \|(\rho^\e -\rho)(s)\|_{H^{-s_1-1}}\,ds\\
&\quad + C\|\rho\|_{L^\infty(0,T;H^{s_1})}\|\varphi\|_{H^{s_1+1}}\int_0^t \|(v^\e -v)(s)\|_{H^{-s_1}}\,ds\\
&\le C\lt(\mathscr{H}[f^\e|M_{\rho, {\rm u}_\e}](0) + \|v_0^\e - v_0\|_{L^2}^2 + \e^2 \rt)\|\varphi\|_{H^{s_1+1}}\\
&\quad +  C\|v\|_{L^\infty(0,T;H^{s_1+1})}\int_0^t (t-s)^{-\frac12} \|(\rho^\e -\rho)(s)\|_{H^{-s_1-1}}\,ds\\
&\quad + C\|\rho\|_{L^\infty(0,T;H^{s_1})}\|\varphi\|_{H^{s_1+1}}\lt(\int_0^t \|(v^\e -v)(s)\|_{H^{-s_1}}^2\,ds\rt)^{\frac12}.
\end{align*}

The terms $J_3$, $J_4$, and $J_5$ are handled exactly as in the estimates of $I_3$, $I_4$, and $I_5$, respectively:
\begin{align*}
J_3 &\le C\e \|\varphi\|_{H^{s_1+1}}\lt(\mathscr{H}[f^\e |M_{\rho, {\rm u}_\e}](0) + \|v_0^\e - v_0\|_{L^2}^2 + \e^2 \rt)^{\frac12},\\
J_4 &\le C\e \|\varphi\|_{H^{s_1+1}}\lt(\mathscr{H}[f^\e |M_{\rho, {\rm u}_\e}](0) + \|v_0^\e - v_0\|_{L^2}^2 + \e^2 \rt)^{\frac12},\\
J_5 &\le C\e \|\varphi\|_{H^{s_1+1}}\lt(\mathscr{H}[f^\e |M_{\rho, {\rm u}_\e}](0) + \e^2 \rt)^{\frac12}.
\end{align*}

Finally, the stress term $J_6$ is controlled in the same way as $I_6$:
\begin{align*}
J_6 &\le C\|\varphi\|_{H^{s_1+1}}\lt(\int_0^t (t-s)^{-\frac12}\lt(\iint_{\R^2\times\R^2} \frac{1}{f^\e}|(\xi-u^\e)f^\e + \nabla_\xi f^\e|^2(s)\,dxd\xi\rt)ds \rt)^{\frac12}\\
&\quad + C\|\varphi\|_{H^{s_1+1}}\lt(\mathscr{H}[f^\e |M_{\rho, {\rm u}_\e}](0) + \|v_0^\e - v_0\|_{L^2}^2 + \e^2 \rt).
\end{align*}

Thus, collecting all the estimates for $J_i$'s, we obtain
\begin{align*}
\|(\rho^\e - \rho)(t)\|_{H^{-s_1-1}} &\le C\|\rho_0^\e - \rho_0\|_{H^{-s_1-1}} + C\lt(\mathscr{H}[f^\e |M_{\rho, {\rm u}_\e}](0) + \|v_0^\e - v_0\|_{L^2}^2 + \e^2 \rt)\\
&\quad + C\|v\|_{L^\infty(0,T;H^{s_1+1})}\int_0^t (t-s)^{-\frac12}\|(\rho^\e -\rho)(s)\|_{H^{-s_1-1}}\,ds\\
&\quad + C\|\rho\|_{L^\infty(0,T;H^{s_1})} \lt(\int_0^t \|(v^\e - v)(s)\|_{H^{-s_1}}^2\,ds\rt)^{\frac12}\\
&\quad + C\lt(\int_0^t (t-s)^{-\frac12}\lt(\iint_{\R^2\times\R^2} \frac{1}{f^\e}|(\xi-u^\e)f^\e + \nabla_\xi f^\e|^2(s)\,dxd\xi\rt)ds \rt)^{\frac12}.
\end{align*}
In this case, we write
\[
\begin{aligned}
g(t) &:= C\|\rho_0^\e - \rho_0\|_{H^{-s_1-1}} + C\lt(\mathscr{H}[f^\e |M_{\rho, {\rm u}_\e}](0) + \|v_0^\e - v_0\|_{L^2}^2 + \e^2 \rt)\\
&\quad + C\|\rho\|_{L^\infty(0,T;H^{s_1})} \lt(\int_0^t \|(v^\e - v)(s)\|_{H^{-s_1}}^2\,ds\rt)^{\frac12}\\
&\quad + C\lt(\int_0^t (t-s)^{-\frac12}\lt(\iint_{\R^2\times\R^2} \frac{1}{f^\e}|(\xi-u^\e)f^\e + \nabla_\xi f^\e|^2(s)\,dxd\xi\rt)ds \rt)^{\frac12}.
\end{aligned}
\]
Applying Lemma \ref{lem_aux_diff} as in the estimate for $\|v^\e -v\|_{L^2(0,T;H^{-s_1})}$, and then using \eqref{est_v_hs1}, we obtain the desired negative Sobolev estimate for the density error:
\[
\|\rho^\e -\rho\|_{L^2(0,T;H^{-s_1-1})}  \le C\lt(\|\rho_0^\e - \rho_0\|_{H^{-s_1-1}} + \|v_0^\e - v_0\|_{H^{-s_1}} + \mathscr{H}[f^\e |M_{\rho, 0}](0) + \|v_0^\e - v_0\|_{L^2}^2 + \e^2 \rt).
\]
This completes the proof.

%
%
%
%
%
%

\section*{Acknowledgments}
 The work of Y.-P. Choi was supported by NRF grant no. 2022R1A2C1002820 and RS-2024-00406821. The work of J. Jung was supported by NRF grant no. 2022R1A2C1002820.\\

\vspace{0.3cm}

\noindent {\bf Data availability.} This manuscript has no associated data.\\

\noindent {\bf Declaration - Conflict of interest.} The authors declare that they have no conflict of interest.

%
%
%
%
%
%

\appendix

\section{Formal diffusion limit and optimal expansion rates}\label{app:op_rate}

In this appendix, we briefly outline the formal diffusive scaling of the kinetic--fluid system \eqref{main1}, with the purpose of identifying the optimal expansion rates for $f^\e$, its macroscopic density
\[
\rho^\e(t,x):=\intr f^\e(t,x,\xi)\,d\xi,
\]
the kinetic momentum
\[
m^\e(t,x):=\intr \xi f^\e(t,x,\xi)\,d\xi=\rho^\e(t,x)u^\e(t,x),
\]
and the fluid velocity $v^\e$.

Throughout this appendix, we focus on the kinetic equation
\[
\pa_t f^\e + \frac1\e \xi\cdot\nabla_x f^\e
= \frac1{\e^2}\nabla_\xi\cdot(\nabla_\xi f^\e + (\xi-\e v^\e)f^\e),
\]
which can be rewritten as
\[
\pa_t f^\e + \frac1\e \xi\cdot\nabla_x f^\e
= \frac1{\e^2}\calL f^\e - \frac1\e \nabla_\xi\cdot(v^\e f^\e),
 \quad
\calL g:=\nabla_\xi\cdot(\nabla_\xi g+\xi g).
\]
The operator $\calL$ is the classical Fokker--Planck operator in velocity, whose kernel is spanned by the normalized Maxwellian
\[
M(\xi):=\frac1{2\pi}e^{-\frac{|\xi|^2}{2}}.
\]

\subsection{Hilbert expansion and first-order corrector}

We postulate the formal Hilbert expansions
\[
f^\e=f_0+\e f_1+\e^2 f_2+\cdots, \quad v^\e=v_0+\e v_1+\cdots,
\]
and compare powers of $\e$ in the kinetic equation.

At order $\e^{-2}$, we obtain $\calL f_0=0$, which implies
\[
f_0(t,x,\xi)=\rho(t,x)M(\xi)
\]
for some scalar field $\rho(t,x)$. At order $\e^{-1}$, we get
\[
\calL f_1=\xi\cdot\nabla_x(\rho M)+\nabla_\xi\cdot(v_0\rho M).
\]
Using $\nabla_\xi M=-\xi M$, we compute
\[
\xi\cdot\nabla_x(\rho M)=(\xi\cdot\nabla_x\rho)M, \quad \nabla_\xi\cdot(v_0\rho M)=\rho\,v_0\cdot\nabla_\xi M
=-\rho\,v_0\cdot\xi\,M.
\]
Hence
\[
\calL f_1=(\xi\cdot\nabla_x\rho-\rho\,v_0\cdot\xi)M.
\]
Recalling that
\[
\calL(\xi_i M)=-\xi_i M,
\]
we find
\bq\label{app_ide}
f_1(t,x,\xi)=(-\xi\cdot\nabla_x\rho+\rho\,v_0\cdot\xi)M.
\eq
The solvability condition is automatically satisfied at this order, since the right-hand side is odd in $\xi$ and hence orthogonal to $\ker\calL$. Moreover, this solution is uniquely determined in $(\ker\calL)^\perp$ by the coercivity of $\calL$; see, for instance, \cite{AMTU01}.

Integrating the kinetic equation in $\xi$, we obtain the exact continuity relation
\[
\pa_t\rho^\e+\frac1\e\nabla_x\cdot m^\e=0.
\]
Using \eqref{app_ide} and the Gaussian moment identities
\[
\intr \xi M\,d\xi=0, \quad \intr \xi_i\xi_j M\,d\xi=\delta_{ij},
\]
we find
\[
m^\e=\e(-\nabla_x\rho+\rho v_0)+O(\e^2).
\]
Substituting this expansion into the continuity equation yields, at leading order,
\[
\pa_t\rho+\nabla_x\cdot(-\nabla_x\rho+\rho v_0)=0.
\]
Since $\nabla_x\cdot v_0=0$, this can be written as
\[
\pa_t\rho+v_0\cdot\nabla_x\rho-\Delta_x\rho=0.
\]

\subsection{Formal optimal convergence rates}

We now extract the formal optimal rates suggested by the Hilbert expansion.

\medskip
\noindent\textbf{Rate for $f^\e$.}
From
\[
f^\e=f_0+\e f_1+\e^2 f_2+O(\e^3),
\]
together with the explicit forms of $f_0$ and $f_1$, we obtain
\[
f^\e = \rho M+\e(-\xi\cdot\nabla_x\rho+\rho\,v_0\cdot\xi)M+O(\e^2).
\]
In particular, truncating at order $0$ gives the weaker approximation
\[
f^\e=\rho M+O(\e).
\]

\medskip
\noindent\textbf{Rate for $\rho^\e$.}
Integrating the Hilbert expansion in $\xi$, we find
\[
\rho^\e = \intr f_0\,d\xi+\e\intr f_1\,d\xi+\e^2\intr f_2\,d\xi+O(\e^3).
\]
Since
\[
\intr f_0\,d\xi=\rho, \quad \intr f_1\,d\xi=0
\]
by oddness in $\xi$, the first-order correction vanishes. Therefore
\[
\rho^\e-\rho = \e^2\intr f_2\,d\xi+O(\e^3),
\]
and hence
\[
\rho^\e=\rho+O(\e^2).
\]

\medskip
\noindent\textbf{Rate for the kinetic momentum.}
Expanding $m^\e$ gives
\[
m^\e =\intr \xi f_0\,d\xi+\e\intr \xi f_1\,d\xi+\e^2\intr \xi f_2\,d\xi+O(\e^3).
\]
Since $\intr \xi f_0\,d\xi=0$, we only need the first-order term. Using \eqref{app_ide}, we obtain
\[
\intr \xi f_1\,d\xi=-\nabla_x\rho+\rho v_0.
\]
Consequently,
\[
m^\e=\e(-\nabla_x\rho+\rho v_0)+O(\e^2),
\]
or equivalently,
\[
\frac1\e m^\e
=\frac1\e \rho^\e u^\e
=-\nabla_x\rho+\rho v_0+O(\e).
\]
Hence the leading-order kinetic momentum is of order $\e$, while the rescaled momentum admits a nontrivial limit.

\medskip
\noindent\textbf{Second-order cancellation in the fluid forcing.}
We next show that the forcing term in the fluid equation exhibits an additional cancellation at order $\e$.
Let
\[
\Sigma^\e:=\intr \xi\otimes\xi\,f^\e\,d\xi.
\]
Multiplying the kinetic equation by $\xi$ and integrating in $\xi$, we obtain the exact momentum balance
\[
\pa_t m^\e+\frac1\e \nabla_x\cdot\Sigma^\e
=-\frac1{\e^2}m^\e+\frac1\e \rho^\e v^\e.
\]
We expand
\[
m^\e=\e m_1+\e^2 m_2+\e^3 m_3+\cdots, \quad \Sigma^\e=\Sigma_0+\e\Sigma_1+\e^2\Sigma_2+\cdots.
\]
Since
\[
f_0=\rho M, \quad f_1=(-\xi\cdot\nabla_x\rho+\rho\,v_0\cdot\xi)M,
\]
we have
\[
\Sigma_0=\intr \xi\otimes\xi\,f_0\,d\xi=\rho \mathbb{I}_2,
\]
while
\[
\Sigma_1=\intr \xi\otimes\xi\,f_1\,d\xi=0
\]
by oddness of the third-order Gaussian moments. Comparing the coefficients of $\e^{-1}$ in the momentum equation yields
\[
m_1=-\nabla_x\rho+\rho v_0,
\]
whereas the coefficients of $\e^0$ give
\[
\nabla_x\cdot\Sigma_1=-m_2+\rho v_1.
\]
Since $\Sigma_1=0$, we infer that
\[
m_2=\rho v_1.
\]

Therefore, the fluid forcing admits the refined expansion
\[
\frac1\e \intr (\xi-\e v^\e)f^\e\,d\xi =\frac1\e m^\e-\rho^\e v^\e =(m_1-\rho v_0)+\e(m_2-\rho v_1)+O(\e^2).
\]
Using the above identities, we conclude that
\[
\frac1\e \intr (\xi-\e v^\e)f^\e\,d\xi =-\nabla_x\rho+O(\e^2).
\]

\medskip
\noindent\textbf{Limit equation and second-order approximation for $v^\e$.}
At leading order, the fluid equation becomes
\[
\pa_t v_0+v_0\cdot\nabla_x v_0+\nabla_x P_0-\Delta_x v_0=-\nabla_x\rho, \quad \nabla_x\cdot v_0=0.
\]
Since the forcing term $-\nabla_x\rho$ is a pure gradient, it can be absorbed into the pressure. Defining
\[
\Pi:=P_0+\rho,
\]
we obtain
\[
\pa_t v_0+v_0\cdot\nabla_x v_0+\nabla_x \Pi-\Delta_x v_0=0, \quad \nabla_x\cdot v_0=0.
\]
Thus, the limit velocity field $v:=v_0$ is governed by the incompressible Navier--Stokes equations.

Moreover, since the forcing term has no contribution at order $\e$, the coefficient $v_1$ satisfies
\[
\pa_t v_1+v_0\cdot\nabla_x v_1+v_1\cdot\nabla_x v_0+\nabla_x P_1-\Delta_x v_1=0, \quad \nabla_x\cdot v_1=0.
\]
Hence, under well-prepared initial data satisfying
\[
v^\e(0)=v_0(0)+\e v_1(0) + O(\e^2), \quad \mbox{with} \quad v_1(0) = 0,
\]
the first-order corrector vanishes, that is,
\[
v_1\equiv 0,
\]
and therefore
\[
v^\e-v=O(\e^2)
\]
at the formal level.

In summary, the Hilbert expansion predicts the formal asymptotic rates
\[
f^\e=\rho M+O(\e), \quad \rho^\e=\rho+O(\e^2),
\]
and, more precisely,
\[
f^\e =\rho M+\e(-\xi\cdot\nabla_x\rho+\rho\,v\cdot\xi)M+O(\e^2),
\]
together with
\[
\frac1\e \rho^\e u^\e =-\nabla_x\rho+\rho v+O(\e).
\]
Moreover, the forcing term in the fluid equation satisfies the refined expansion
\[
\frac1\e \intr (\xi-\e v^\e)f^\e\,d\xi =-\nabla_x\rho+O(\e^2).
\]
In particular, under a well-prepared expansion of the initial data, the first-order corrector in the fluid velocity vanishes, and the formal expansion yields
\[
v^\e-v=O(\e^2).
\]

%
%
%
%
%
%

\section{Existence theory for the limiting system}\label{app:ext}
In this part, we discuss the existence of solutions to the limiting system \eqref{lim_light} satisfying the required regularity stated in Theorem \ref{thm_main}. Recall that the limiting system reads
\begin{align}\label{lim_light2}
\begin{aligned}
&\pa_t \rho +  \nabla  \cdot (\rho v) =   \Delta  \rho, \cr
&\pa_t v + v\cdot \nabla  v + \nabla P -  \Delta  v = 0,\cr
&\nabla \cdot v = 0.
\end{aligned}
\end{align}
Since the incompressible Navier--Stokes system is not affected by the advection--diffusion equation for $\rho$, we focus on the diffusion equation. More precisely, we provide the propagation of regularity for $\rho$ for a given sufficiently smooth $v$.
 
Define a solution space $X_{s,p}(T)$ for $p>2$ and $T>0$:
\[
X_{s,p}(T) := \{ \rho > 0 :  \rho \in L^\infty([0,T]; L^1 \cap L^\infty(\om)), \quad  \nabla \log \rho \in L^\infty([0,T]; (L^p \cap \dot{H}^s)(\om)) \}.
\]
\begin{theorem}
Let $s>2$ and let $T>0$ be arbitrary. Let $v$ be a given divergence-free vector field on $[0,T]\times\Omega$
such that
\[
\int_0^T \|v(t)\|_{W^{1,\infty}\cap \dot{H}^{s+1}}(\Omega)\,dt <\infty.
\]
Assume $\rho_0 > 0$, $\rho_0\in L^1(\Omega)\cap L^\infty(\Omega)$, and
$\phi_0:=\nabla\log\rho_0\in (L^p\cap \dot{H}^s)(\Omega)$.
\begin{itemize}
\item[(i)] (Local existence and continuation criterion) There exists $T_*\in(0,T]$, depending on $\|\phi_0\|_{L^p\cap \dot{H}^s}$ and $\|\nabla v\|_{L^1(0,T;L^\infty)}$, and a unique solution $\rho\in X_{s,p}(T_*)$ of \eqref{lim_light2}. Moreover, this solution can be continued up to any time $T'\le T$ provided that
\[
\int_0^{T'} \|\nabla\phi(t)\|_{L^\infty(\Omega)}\,dt <\infty.
\]

\item[(ii)] (Global-in-time existence on $\T^2$) If $\Omega=\T^2$, then the above continuation criterion is satisfied for all $T'>0$, and hence $\rho\in X_{s,p}(T)$.
\end{itemize}
\end{theorem}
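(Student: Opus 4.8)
The strategy is to transfer the problem to the logarithmic gradient $\phi:=\nabla\log\rho$, for which \eqref{lim_light2} becomes a semilinear heat equation amenable to a fixed-point construction, and to read off the positivity and the $L^1\cap L^\infty$ bounds on $\rho$ itself from the maximum principle. Since $v$ is divergence-free, the density equation reads $\pa_t\rho+v\cdot\nabla\rho=\Delta\rho$; setting $\psi:=\log\rho$ gives $\pa_t\psi-\Delta\psi=|\nabla\psi|^2-v\cdot\nabla\psi$, and taking the gradient yields the closed system
\[
\pa_t\phi-\Delta\phi=\nabla\big(|\phi|^2-v\cdot\phi\big),\qquad \phi|_{t=0}=\phi_0=\nabla\log\rho_0 .
\]
The divergence form of the source is the structural point: it allows an integration by parts in every energy estimate, so that only $|\phi|^2-v\cdot\phi$ in $\dot H^s$ (rather than one more derivative) needs to be controlled. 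Conversely, once $\phi$ is known, $\psi$ is determined up to an additive constant fixed by conservation of mass $\int_\Omega\rho\,dx=\int_\Omega\rho_0\,dx$, and $\rho=e^\psi$; the positivity $\rho>0$, the bound $\|\rho(t)\|_{L^\infty}\le\|\rho_0\|_{L^\infty}$, and the conservation $\|\rho(t)\|_{L^1}=\|\rho_0\|_{L^1}$ follow from the weak maximum/minimum principle for the linear advection--diffusion equation with Lipschitz drift $v$.

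For part (i) I would construct $\phi$ by a contraction argument on the Duhamel map
\[
\Psi\;\longmapsto\; e^{t\Delta}\phi_0+\int_0^t e^{(t-\tau)\Delta}\,\nabla\big(|\Psi|^2-v\cdot\Psi\big)(\tau)\,d\tau
\]
in a ball of $C([0,T_*];L^p\cap\dot H^s)$, performing the contraction in a slightly weaker norm $\dot H^{s'}$ with $1<s'<s$ while propagating the $L^p\cap\dot H^s$ bound, as usual. The inputs are: the smoothing estimate $\|e^{t\Delta}\nabla g\|_{\dot H^s}\lesssim t^{-1/2}\|g\|_{\dot H^s}$ together with $\int_0^{T_*}t^{-1/2}\,dt=2\sqrt{T_*}$; the Moser-type product bounds $\||\phi|^2\|_{\dot H^s}\lesssim\|\phi\|_{L^\infty}\|\phi\|_{\dot H^s}$ and $\|v\cdot\phi\|_{\dot H^s}\lesssim\|v\|_{L^\infty}\|\phi\|_{\dot H^s}+\|v\|_{\dot H^s}\|\phi\|_{L^\infty}$, and the analogous $L^p$ estimates; and, crucially, the two-dimensional embedding $L^p\cap\dot H^s\hookrightarrow W^{1,\infty}$, valid precisely because $s>2$ and $p>2$, which turns $\|\phi\|_{L^\infty}$ and $\|\nabla\phi\|_{L^\infty}$ into quantities controlled by the solution norm. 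This yields a unique $\rho\in X_{s,p}(T_*)$ with $T_*$ depending only on $\|\phi_0\|_{L^p\cap\dot H^s}$ and the relevant norms of $v$; uniqueness on all of $[0,T_*]$ then follows from a difference estimate for two solutions in $\dot H^{s'}$ (or in $L^2\cap L^p$).

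For the continuation criterion I would instead use the direct energy estimate: testing the $\phi$-equation with $(-\Delta)^{s}\phi$ and with $|\phi|^{p-2}\phi$, integrating the divergence-form source by parts, exploiting the curl-free identity $\nabla|\phi|^2=2(\phi\cdot\nabla)\phi$ (so that the commutator produced by the quadratic term costs only $\|\nabla\phi\|_{L^\infty}$) and $\nabla\!\cdot v=0$ for the transport part, and absorbing the top-order contributions into the dissipation $\|\phi\|_{\dot H^{s+1}}^2$ by Young's inequality, one obtains a Riccati-type inequality
\[
\frac{d}{dt}\,Y(t)\le C\big(1+\|v(t)\|_{\dot H^s\cap W^{1,\infty}}+\|\nabla\phi(t)\|_{L^\infty}\big)\,Y(t),\qquad Y:=\|\phi\|_{L^p}^2+\|\phi\|_{\dot H^s}^2 .
\]
Gr\"onwall's lemma then shows that $Y$ stays bounded on $[0,T']$ as long as $\int_0^{T'}\|\nabla\phi(t)\|_{L^\infty}\,dt<\infty$, since $\int_0^{T'}\|\nabla v(t)\|_{L^\infty}\,dt<\infty$ is assumed (and $\|v\|_{L^\infty_t\dot H^s}<\infty$ holds for the fluid field at hand). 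This is the stated criterion. For part (ii), compactness of $\T^2$ forces $0<\min_{\T^2}\rho_0\le\rho_0\le\|\rho_0\|_{L^\infty}<\infty$; the minimum principle propagates the lower bound, $\rho(t,\cdot)\ge\min_{\T^2}\rho_0>0$ for all $t$, and parabolic regularity for the linear $\rho$-equation keeps $\rho$ in every Sobolev class, so that $\nabla\log\rho=\nabla\rho/\rho$ is bounded in $W^{1,\infty}$ uniformly on $[0,T']$ for each $T'$; the continuation criterion then gives $\rho\in X_{s,p}(T)$.

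The main obstacle is the a priori estimate in $\dot H^s$ with $s>2$: in two dimensions this index lies above the natural transport--diffusion threshold $s\le 1+n/p$, so neither the quadratic term $\nabla|\phi|^2$ nor the transport term $\nabla(v\cdot\phi)$ can be handled by soft arguments, and one has to combine the divergence structure, the gain provided by the heat dissipation, and the embedding $L^p\cap\dot H^s\hookrightarrow W^{1,\infty}$ to close a self-improving inequality. The secondary subtlety is the coupling with the regularity of $v$ needed to make sense of $\|v\cdot\phi\|_{\dot H^s}$: this is handled by using that $v$ is as regular as a two-dimensional Navier--Stokes solution (so $v\in L^\infty_t\dot H^s$), while the continuation and global statements only require the quantitative control $\int_0^{T'}\|\nabla v\|_{L^\infty}\,dt<\infty$ emphasized in the hypotheses.
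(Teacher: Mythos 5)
Your overall strategy mirrors the paper's: transfer to $\phi=\nabla\log\rho$, run $L^p$ and $\dot H^s$ energy estimates to get a priori bounds and a continuation criterion, recover positivity and $L^1\cap L^\infty$ control of $\rho$ from the maximum principle, and use the lower bound on $\rho$ over $\T^2$ to verify the criterion globally. Two deviations deserve comment.

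\emph{Local existence.} You build the solution by a Duhamel fixed point for $\phi$, whereas the paper obtains it from the a priori $L^p\cap\dot H^s$ bound applied to mollified solutions and then passes to the limit. Both are standard and valid; the fixed-point route is more self-contained but has the same essential input (the embedding $L^p\cap\dot H^s\hookrightarrow W^{1,\infty}$, $s>2$, $p>2$, $d=2$). Note that your Riccati coefficient involves $\|v\|_{\dot H^s\cap W^{1,\infty}}$, whereas the theorem's hypothesis supplies only $\int_0^T\|v\|_{W^{1,\infty}}\,dt<\infty$. The paper's $\dot H^s$ estimate records only $\|\nabla v\|_{L^\infty}$ in the commutator bound for $J_1$; a strict Kato--Ponce reading would also require higher Sobolev control of $v$, so this is a subtlety present on both sides, but if you want your statement to track the paper's hypotheses you would either need to restate the criterion with the $v$-regularity you actually use, or argue more carefully that the divergence and curl-free structure lets one trade the $\|\Lambda^{s+1}v\|_{L^2}$ contribution for $\|\nabla v\|_{L^\infty}$.

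\emph{Global extension on $\T^2$.} This is where you have a genuine gap. The assertion that ``parabolic regularity for the linear $\rho$-equation keeps $\rho$ in every Sobolev class'' is not correct under the stated hypothesis $v\in L^1(0,T;W^{1,\infty})$: the drift is not smooth, and no bootstrap puts $\rho$ in arbitrarily high Sobolev spaces. What is both needed and available is the instantaneous $W^{2,\infty}$ smoothing of the heat semigroup away from $t=0$. The paper makes this precise by writing the Duhamel formula for $\rho$ starting at an intermediate time $t_0>0$, using $\|\nabla^m e^{\tau\Delta}f\|_{L^\infty}\lesssim \tau^{-m/2}\|f\|_{L^\infty}$ to close a singular Gr\"onwall inequality for $\|\nabla\rho\|_{L^\infty}$, then iterating once more (restarting Duhamel at $t_0+\delta/2$) to obtain $\|\nabla\rho\|_{W^{1,\infty}}$ on $[t_0+\delta,T]$; near $t=0$ one instead uses the local $H^s$ bound on $\phi$. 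Only then does the chain-rule identity
\[
\nabla\phi=\frac{\nabla^2\rho}{\rho}-\frac{\nabla\rho\otimes\nabla\rho}{\rho^2}
\]
together with the lower bound $\rho\ge c_0>0$ give $\int_0^{T'}\|\nabla\phi\|_{L^\infty}\,dt<\infty$ and close the continuation criterion. Replacing your ``every Sobolev class'' remark with this two-step Duhamel--smoothing argument fills the gap.
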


\begin{remark}
Note that if $\om=\T^2$, we may assume $\phi_0 \in H^s(\T^2)$, which can be allowed by the positive infimum of $\rho_0$. However, this is not the case for $\om = \R^2$. Instead, $\rho_0$ of the form
\[
\rho_0(x) := C_k (1+|x|^2)^{-k}
\]
satisfies the desired condition $\phi_0 \in (L^p \cap \dot{H}^s)(\om)$.
\end{remark}

\begin{proof}[Proof of Theorem B.1]
We argue at the level of smooth solutions (obtained by standard mollification of $\rho_0$ and $v$), derive a priori estimates independent of the mollification parameter, and then pass to the limit.

\medskip
\noindent\textbf{Step 1. A priori estimate for $\phi$ and a continuation criterion.}
We first observe that $\phi=\nabla\log\rho$ satisfies
\[
\pa_t\phi+\nabla(\phi\cdot v)=\Delta\phi+\nabla(|\phi|^2).
\]
 
We first estimate $L^p$-norm of $\phi$. For this,
\[\begin{aligned}
\frac1p\frac{d}{dt}\|\phi\|_{L^p}^p &= \intr |\phi|^{p-2}\phi \cdot (- \nabla (\phi \cdot v) +  \Delta \phi +  \nabla(|\phi|^2))\,dx\\
&=: I_1 + I_2 + I_3.
\end{aligned}\]
 For $I_1$, we use
\[
\pa_i( |\phi|^\ell) = \ell |\phi|^{\ell-2} \phi_j \pa_i \phi_j, \quad \ell \in \N, 
\]
to get
\[\begin{aligned}
I_1 &= -\intr  |\phi|^{p-2}\phi_i \pa_i (v_j \phi_j) \,dx = -\intr |\phi|^{p-2}\phi_i \phi_j \pa_i v_j \,dx - \intr |\phi|^{p-2} \phi_i \pa_i \phi_j v_j\,dx\\
&= -\intr |\phi|^{p-2}\phi_i \phi_j \pa_i v_j \,dx +\frac1p \intr \nabla (|\phi|^p) \cdot v \,dx\\
&\le \|\nabla v\|_{L^\infty}\|\phi\|_{L^p}^p.
\end{aligned}\]
For $I_2$ and $I_3$, we obtain
\[\begin{aligned}
I_2 = \intr |\phi|^{p-2} \phi_j \pa_{ii}\phi_j\,dx &= -\intr |\phi|^{p-2}|\nabla \phi|^2\,dx - (p-2)\intr |\phi|^{p-4}(\phi_k \pa_i \phi_k)(\phi_j \pa_i \phi_j)\,dx\\
&=   -\intr |\phi|^{p-2}|\nabla \phi|^2\,dx -\frac{4(p-2)}{p^2} \intr \left|\nabla |\phi|^{\frac p2} \right|^2 dx
\end{aligned}\]
and
\[\begin{aligned}
I_3 &= 2 \intr |\phi|^{p-2} \phi_j \pa_j \phi_k \phi_k \,dx = \frac2p \intr \nabla (|\phi|^p) \cdot \phi \,dx \le C\|\nabla\cdot\phi\|_{L^\infty}\|\phi\|_{L^p}^p.
\end{aligned}\]
Thus, we have
\[
\frac{d}{dt}\|\phi\|_{L^p}^p + p\intr |\phi|^{p-2}|\nabla \phi|^2\,dx +\frac{4(p-2)}{p} \intr \left|\nabla |\phi|^{\frac p2}  \right|^2 dx \le C\lt(\|\nabla v\|_{L^\infty} + \|\nabla\phi\|_{L^\infty} \rt)\|\phi\|_{L^p}^p.
\]
 
 Now, applying $\Lambda^a:= (-\Delta)^\frac a2$, $a > 0$ and testing against $\phi$ gives
\[\begin{aligned}
\frac12\frac{d}{dt}\|\Lambda^a \phi\|_{L^2}^2 + 2 \|\Lambda^a(\nabla  \phi)\|_{L^2}^2&= - \into \Lambda^a \nabla (\phi\cdot v) \cdot \Lambda^a \phi\,dx  + \into \Lambda^a (\nabla |\phi|^2) \cdot \Lambda^a \phi\,dx\\
&=: J_1 + J_2.
\end{aligned}\]
For $J_1$, we use the symmetry $\pa_j \phi_i = \pa_i \phi_j$ to estimate
\[\begin{aligned}
J_1 &= -\into v_j (\Lambda^a \pa_i \phi_j) \Lambda^a \phi_i\,dx - \into \lt[ \Lambda^a (\pa_i v_j \phi_j) - v_j \Lambda^a \pa_i \phi_j\rt] \Lambda^a \phi_i\,dx\\
&=-\into v_j (\Lambda^a \pa_j \phi_i) \Lambda^a \phi_i\,dx - \into \lt[ \Lambda^a (\pa_i v_j \phi_j) - v_j \Lambda^a \pa_i \phi_j\rt] \Lambda^a \phi_i\,dx\\
&= - \into \lt[ \Lambda^a (\pa_i v_j \phi_j) - v_j \Lambda^a \pa_i \phi_j\rt] \Lambda^a \phi_i\,dx\\
&\le C\|\nabla v\|_{L^\infty}\|\Lambda^a \phi\|_{L^2}^2   + C\|\Lambda^{a+1}v\|_{L^2}\|\phi\|_{L^\infty}\|\Lambda^a \phi\|_{L^2}.
\end{aligned}\]
The estimate of $J_2$ is similar:
\[\begin{aligned}
J_2 &= \into \phi_j (\Lambda^a \pa_i \phi_j)\Lambda^a \phi_i \,dx + \into \lt[\Lambda^a \pa_i (\phi_j \phi_j) -\phi_j (\Lambda^a \pa_i \phi_j)  \rt]\Lambda^a \phi_i \,dx\\
&= -\frac12 \into (\nabla\cdot\phi) |\Lambda^a \phi|^2\,dx + \into \lt[\Lambda^a \pa_i (\phi_j \phi_j) -\phi_j (\Lambda^a \pa_i \phi_j)  \rt]\Lambda^a \phi_i \,dx\\
&\le C\|\nabla\phi\|_{L^\infty}\|\Lambda^a \phi\|_{L^2}^2.
\end{aligned}\]
Consequently, we have
\bq\label{eq:Hs}
\frac{d}{dt}\|\phi\|_{L^p \cap \dot{H}^s}^2  \le C\lt(\|\nabla v\|_{L^\infty} + \|v\|_{\dot{H}^{s+1}} + \|\nabla\phi\|_{L^\infty}\rt)\|\phi\|_{L^p \cap \dot{H}^s}^2.
\eq
Since $s>2$ in two dimensions, $L^p \cap \dot{H}^s(\Omega)\hookrightarrow W^{1,\infty}(\Omega)$, hence $\|\nabla\phi\|_{L^\infty}\le C\|\phi\|_{L^p \cap \dot{H}^s}$ and \eqref{eq:Hs} implies a local-in-time bound
\[
\sup_{0\le t\le T_*}\|\phi(t)\|_{H^s}<\infty \quad\text{for some }T_*=T_*\lt(\|\phi_0\|_{H^s},\|\nabla v\|_{L^1(0,T;L^\infty)},   \|v\|_{L^1(0,T;\dot{H}^{s+1})}  \rt)>0.
\]
In particular, \eqref{eq:Hs} yields the following continuation criterion:
if
\bq\label{eq:cri}
\int_0^{T}\|\nabla\phi(t)\|_{L^\infty(\Omega)}\,dt<\infty,
\eq
then $\sup_{t\in[0,T]}\|\phi(t)\|_{H^s}<\infty$, and the solution can be continued up to time $T$.

\medskip
\noindent\textbf{Step 2. Local existence in $X_{s,p}(T_*)$.}
The bound on $\phi$ obtained above implies $\phi\in L^\infty(0,T_*;L^\infty(\om))$. Since $\rho$ solves $\pa_t\rho+v\cdot\nabla\rho=\Delta\rho$, the maximum principle gives $\rho\in L^\infty(0,T_*;L^1\cap L^\infty(\om))$, and the positivity $\rho>0$ is propagated by the maximum principle as long as the smooth solution exists. Therefore $\rho\in X_{s,p}(T_*)$. Uniqueness follows from the linearity of the advection--diffusion equation for $\rho$ with a given divergence-free velocity field $v\in L^1(0,T_*;W^{1,\infty}\cap \dot{H}^{s+1}(\om))$. In particular, two solutions with the same initial data must coincide, and hence $\phi=\nabla\log\rho$ is uniquely determined as well.

\medskip
\noindent\textbf{Step 3. Global extension on $\T^2$.}
Assume now $\Omega=\T^2$. Since $\phi_0=\nabla\log\rho_0\in H^s(\T^2)$ with $s>2$, we have $\rho_0\in C^1(\T^2)$ and $\rho_0>0$. By compactness of $\T^2$, there exists $c_0>0$ such that $\rho_0\ge c_0$ on $\T^2$. Moreover, since $\nabla\cdot v=0$, the equation can be written as $\pa_t\rho+v\cdot\nabla\rho=\Delta\rho$. The standard maximum principle then yields
\bq\label{eq:rho_mp}
c_0\le \rho(t,x)\le \|\rho_0\|_{L^\infty(\T^2)}
\quad\text{for all }(t,x)\in[0,T]\times\T^2.
\eq

Fix $t_0\in(0,T_*)$ and choose $\delta \in (0,\min\{T,T_*\}-t_0 )$. Writing $\pa_t\rho-\Delta\rho=-v\cdot\nabla\rho$, the Duhamel formula gives for $t\ge t_0$,
\bq\label{eq:duha}
\rho(t)=e^{(t-t_0)\Delta}\rho(t_0)-\int_{t_0}^t e^{(t-s)\Delta}\lt(v(s)\cdot\nabla\rho(s)\rt) ds.
\eq
Using the $L^\infty$-smoothing of the heat semigroup on $\T^2$,
\[
\|\nabla^m e^{\tau\Delta}f\|_{L^\infty}\le C_m \tau^{-m/2}\|f\|_{L^\infty},\quad \tau>0,
\]
we infer (for $t\ge t_0$)
\begin{align*}
\|\nabla\rho(t)\|_{L^\infty} &\le C(t-t_0)^{-\frac12}\|\rho(t_0)\|_{L^\infty}  +\int_{t_0}^t C(t-s)^{-\frac12}\|v(s)\|_{L^\infty}\|\nabla\rho(s)\|_{L^\infty}\,ds.
\end{align*}
This yields
\[
\sup_{t\in[t_0+\delta,T]}\|\nabla\rho(t)\|_{L^\infty} \le C_{\delta,T} \|\rho(t_0)\|_{L^\infty}  \exp \lt(C_{\delta,T}\|v\|_{L^1(t_0,T;L^\infty)}\rt).
\]

Next, using $\nabla\rho=\rho\,\phi$ and \eqref{eq:rho_mp}, we control the gradient at time $t_0$ by
\[
\|\nabla\rho(t_0)\|_{L^\infty} \le \|\rho_0\|_{L^\infty}\,\|\phi\|_{L^\infty(0,t_0;L^\infty)},
\]
where $\|\phi\|_{L^\infty(0,t_0;L^\infty)}<\infty$ follows from the local $H^s$-bound in Step 1. 

Finally, restarting \eqref{eq:duha} at $t_1:=t_0+\delta/2$ and applying the same smoothing argument to $\nabla\rho$, we obtain
\bq\label{eq:W1}
\sup_{t\in[t_0+\delta,T]}\|\nabla\rho(t)\|_{W^{1,\infty}} \le C_{\delta,T} \lt(\|\rho_0\|_{L^\infty} + \|\rho_0\|_{L^\infty}\|\phi\|_{L^\infty(0,t_0;L^\infty)}\rt) \exp \lt(C_{\delta,T}  \|v\|_{L^1(t_0,T;W^{1,\infty})} \rt).
\eq

\medskip
\noindent\textbf{Step 4. Conclusion.}
By \eqref{eq:rho_mp} and the identity
\[
\nabla\phi=\nabla\lt(\frac{\nabla\rho}{\rho}\rt)=\frac{\nabla^2\rho}{\rho}-\frac{\nabla\rho\otimes\nabla\rho}{\rho^2},
\]
we deduce for $t\in[t_0+\delta,T]$ that
\[
\|\nabla\phi(t)\|_{L^\infty} \le \frac{1}{c_0}\|\nabla^2\rho(t)\|_{L^\infty} +\frac{1}{c_0^2}\|\nabla\rho(t)\|_{L^\infty}^2.
\]
For $t\in[t_0+\delta,T]$, the right-hand side is uniformly bounded by \eqref{eq:W1}, and thus
\[
\int_{t_0+\delta}^{T}\|\nabla\phi(t)\|_{L^\infty}\,dt<\infty.
\]
On the other hand, on $[0,t_0+\delta]$ we already have $\phi\in L^\infty(0,t_0+\delta;H^s(\T^2))$, and this gives
\[
\int_0^{t_0+\delta}\|\nabla\phi(t)\|_{L^\infty}\,dt<\infty. 
\]
Hence, the continuation criterion \eqref{eq:cri} is satisfied on $[0,T]$. By Step 1, the local solution extends up to time $T$, and therefore $\rho\in X_{s,p}(T)$.
\end{proof}

%
%
%
%
%
%


\end{document}